\newcolumntype{C}[1]{>{\centering\arraybackslash$}p{#1}<{$}}
\numberwithin{equation}{section}
\crefname{conjecture}{conjecture}{conjectures}
\Crefname{conjecture}{Conjecture}{Conjectures}
\crefname{observation}{observation}{observations}
\Crefname{observation}{Observation}{Observations}
\crefname{hope}{hope}{hopes}
\Crefname{hope}{Hope}{Hopes}
\crefname{openproblem}{open problem}{open problems}
\Crefname{openproblem}{Open problem}{Open problems}
\newcommand{\drawPath}[4]
{%
  \draw[rounded corners=1, line width=#2, #3] #4
  \foreach \dir in {#1}%
  {
    \ifnum\dir=0
    -- ++(1,0)
    \else
      \ifnum\dir=1
        -- ++(0,1)
      \else
        \ifnum\dir=2
          -- ++ (1,1)
        \fi
      \fi
    \fi
  };}
\newtheorem{theorem}{Theorem}[section]
\newtheorem{proposition}[theorem]{Proposition}
\newtheorem{lemma}[theorem]{Lemma}
\newtheorem{corollary}[theorem]{Corollary}
\newtheorem{definition}[theorem]{Definition}
\theoremstyle{definition}
\newtheorem{example}[theorem]{Example}
\newtheorem{remark}[theorem]{Remark}
\newtheorem{openproblem}{Open problem}
\definecolor{darkblue}{rgb}{0,0,0.7} %
\newcommand{\Def}[1]{\textbf{#1}} %
\newcommand{\Dfn}[1]{\textbf{#1}} %
\renewcommand{\emptyset}{\varnothing}
\newcommand{\N}{\mathbb{N}}
\newcommand{\Z}{\mathbb{Z}}
\newcommand{\R}{\mathbb{R}}
\newcommand{\Se}{S^{\e}}
\newcommand{\Sn}{S^{\n}}
\renewcommand{\mid}{{ \;:\; }}
\newcommand{\bigset}[2]{\left\{\; #1 \;:\; #2 \;\right\}} %
\newcommand{\paths}[1]{\mathcal{L}[#1]}
\newcommand{\latpaths}[1]{\mathcal{L}_{#1}}
\newcommand{\n}{\mathbf{n}}
\newcommand{\east}[1]{\mathbf{E}(#1)}
\newcommand{\e}{\mathbf{e}}
\renewcommand{\d}{\mathbf{d}}
\newcommand{\emp}{\bm\varepsilon}
\newcommand{\dem}[2]{\operatorname{dem}_{#1}(#2)}
\newcommand{\mar}[2]{\operatorname{mark}_{#1}(#2)}
\newcommand{\matroid}[1]{M[#1]}
\newcommand{\bases}[1]{\mathcal{B}[#1]}
\newcommand{\stat}{\operatorname{st}}
\newcommand{\bij}{\Lambda}
\newcommand{\leqcomp}{\leq_{\operatorname{comp}}}
\newcommand{\Ltriv}{{L^{tr}}}
\newcommand{\x}{\mathbf{x}}
\title[Standard complexes of matroids]{Standard complexes of matroids\\ and lattice paths}
\author[A.~Engström]{Alexander Engström} %
\address[A.~Engström]{Department of Mathematics and Systems Analysis, Aalto
University, Espoo, Finland}
\email{alexander.engstrom@aalto.fi}
\author[R.~Sanyal]{Raman Sanyal} %
\address[R.~Sanyal]{Institut f\"ur Mathematik, Goethe-Universit\"at
Frankfurt, Germany} 
\email{sanyal@math.uni-frankfurt.de}
\author[C.~Stump]{Christian Stump$^{\ddagger}$}
\address[C.~Stump]{Fakult\"at f\"ur Mathematik, Ruhr-Universit\"at Bochum, Germany}
\email{christian.stump@rub.de}
\thanks{$^\ddagger$Supported by DFG grant STU 563/4-1 ``Noncrossing phenomena in Algebra and Geometry''.}
\date{\today}
\keywords{standard monomials, simplicial complexes, lattice path matroids}
\subjclass[2010]{%
05Exx,	%
13P25,  %
05B35,  %
52B40}  %
\begin{document}

\begin{abstract}
    Motivated by Gr\"obner basis theory for finite point configurations, we
    define and study the class of \emph{standard complexes} associated to a
    matroid. Standard complexes are certain subcomplexes of the
    independence complex that are invariant under matroid duality.
    For the lexicographic
    term order, the standard complexes satisfy a deletion-contraction-type
    recurrence. We explicitly determine the lexicographic standard complexes
    for lattice path matroids using classical bijective combinatorics.
\end{abstract}

\maketitle

\setcounter{tocdepth}{1}

\newcommand\Ind{\mathcal{I}}%
\section{Introduction}\label{sec:intro}
Matroids come with a rich enumerative theory. Mostly, this can be attributed
to the deletion-contraction paradigm that is inherent to matroid theory and
that culminates in the existence of the Tutte polynomial;
see~\cite[Ch.~6]{matroid_applications}. Other objects encoding valuable
enumerative properties can be associated to matroids. Trivially, the
collection of independent sets $\Ind(M)$ of a matroid $M$ is a simplicial
complex, whose number of faces of various dimensions and topological features
give enumerative invariants of $M$. Far less trivial are Brylawski's broken
circuit complexes~\cite{Bry}. Built on ideas of Whitney, these are simplicial
complexes associated to matroids with a totally ordered groundset. Their
enumerative and topological properties explain many combinatorial
characteristics of the underlying matroid~\cite[Ch.~7]{matroid_applications}.
Both complexes, the independence as well as the broken circuit complex, can be
constructed from a deletion-contraction-type process. The goal of this paper
is to define and study a new class of simplicial complexes associated to
matroids and to showcase their combinatorial structure.

\newcommand\ZO{\{0,1\}}%
\newcommand\Sc{\mathcal{S}}%
\newcommand\ScP{\mathcal{S}_\preceq}%
Our simplicial complexes are motivated by Gr\"obner bases theory: Let $V
\subseteq \ZO^n$ be a $0/1$-point configuration and $\preceq$ a term order on
$\R[x_1,\dots,x_n]$. The standard monomials of the vanishing ideal $I(V)$ are
squarefree and thus encode a simplicial complex that we call  the
\Def{standard complex} $\ScP(V)$ of $V$. A matroid $M$ on groundset $[n] :=
\{1,2,\dots,n\}$ is canonically represented by its basis configuration $V_M
\subset \ZO^n$ and we define its standard complex as $\ScP(M) := \ScP(V_M)$.

Encoding and studying combinatorial objects by means of zero-dimensional
ideals and Gr\"obner bases has a long history; see~\cite{lovasz,
deLoera,robbiano98, Anstee, Hegedus,Laurent07} for a non-exhaustive
selection. In particular~\cite{Laurent07} emphasizes the use of standard
monomials but the point of view of simplicial complexes and matroids has been
largely neglected. Let us highlight some of the interesting
properties of standard complexes of matroids: For any term order $\preceq$,
the standard complex $\ScP(M)$ is a subcomplex of the independence complex
$\Ind(M)$  of $M$ (Corollary~\ref{cor:sub_ind}) and its number of faces is
precisely the number of bases of $M$ (Corollary~\ref{cor:sc_bases}). Moreover,
the standard complex is invariant under matroid duality, that is, $\ScP(M) =
\ScP(M^*)$, where $M^*$ is the matroid dual to $M$
(Proposition~\ref{prop:dual}). 

\newcommand\Scl{\Sc_{\mathsf{lex}}}%
For a nonempty matroid $M$, we write $m(M)$ for the largest element in its groundset $E \subseteq \N$, and we write $\Scl(M)$ for the standard complex with respect to the lexicographic term order such that $x_1 \succ x_2 \succ \cdots$.
Then the standard complex has the following
deletion-contraction-type decomposition. Recall that for a simplicial complex
$K$ and $v$ not a vertex of $K$, the \Def{cone} of $K$ with apex $v$ is the
complex $v \ast K  :=  K \ \cup \ \{ v \cup \sigma : \sigma \in K \}$.

\begin{theorem}\label{thm:lex}
    Let $M$ be a matroid and $m = m(M)$. If $m$ is not a loop or coloop,
    then
    \[
        \Scl(M) \ = \ 
        \Scl(M \backslash m) \ \cup \
        \Scl(M / m) \ \cup \
        \left( m \ast \big(\Scl(M \backslash m) \cap \Scl(M / m) \big) \right) \, .
    \]
    Otherwise, we have
    \[
    \Scl(M) \ =\ \begin{cases}
      \Scl(M \backslash m) &\text{ if $m$ is a coloop} \\
      \Scl(M / m) &\text{ if $m$ is a loop} \\
                \end{cases}\ .
    \]
\end{theorem}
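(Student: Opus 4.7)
The plan is to analyze the vanishing ideal $I(V_M) \subseteq \R[x_1,\dots,x_m]$ via the partition $V_M = V_0 \sqcup V_1$ of the basis configuration according to the value of $x_m$. Under natural identifications, $V_0$ corresponds to $V_{M\backslash m}$ (when $m$ is not a coloop) and $V_1$ to $V_{M/m}$ (when $m$ is not a loop), both embedded in $\{0,1\}^{m-1}$. Writing a polynomial as $g + x_m h$ with $g,h$ free of $x_m$, one verifies
$$
g + x_m h \in I(V_M) \iff g \in I(V_{M\backslash m}) \text{ and } g + h \in I(V_{M/m}),
$$
and this algebraic splitting is the workhorse. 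The loop and coloop cases then fall out immediately: if $m$ is a coloop, $x_m - 1 \in I(V_M)$ makes $x_m$ a leading term, and since $x_m$ is the lex-smallest variable, no standard monomial involves it; the remaining ones coincide with the standard monomials of $I(V_{M\backslash m})$. The loop case is symmetric via $x_m \in I(V_M)$.

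For the main case ($m$ neither loop nor coloop) I argue by matching cardinalities together with one inclusion. Bases of $M$ split into those containing $m$ (bijecting with bases of $M/m$) and those not (bijecting with bases of $M\backslash m$), so both sides carry the same number of faces: the left by Corollary~\ref{cor:sc_bases}, the right by inclusion--exclusion, using that neither $\Scl(M\backslash m)$ nor $\Scl(M/m)$ contains a face with vertex $m$, so all pairwise and triple intersections collapse to $K := \Scl(M\backslash m) \cap \Scl(M/m)$. It therefore suffices to prove $\Scl(M) \subseteq \mathrm{RHS}$. Take $\sigma \in \Scl(M)$ and set $\tau := \sigma \setminus \{m\}$; in each sub-case I argue by contradiction by exhibiting an element of $I(V_M)$ with leading monomial $x^\sigma$. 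If $m \in \sigma$ and $\tau \notin \Scl(M/m)$, take $f \in I(V_{M/m})$ with leading monomial $x^\tau$; then $x_m f \in I(V_M)$ (vanishing on $V_0$ via $x_m$ and on $V_1$ via $f$) with leading monomial $x_m x^\tau = x^\sigma$. If $m \in \sigma$ and $\tau \notin \Scl(M\backslash m)$, take $f \in I(V_{M\backslash m})$ with leading monomial $x^\tau$; then $(1-x_m) f \in I(V_M)$, again with leading monomial $x^\sigma$, since $x_m x^\alpha \succ x^\alpha$ whenever $m \notin \alpha$. If $m \notin \sigma$ and $\sigma$ lies in neither $\Scl(M\backslash m)$ nor $\Scl(M/m)$, pick monic $f_1 \in I(V_{M\backslash m})$ and $f_2 \in I(V_{M/m})$ with leading monomial $x^\sigma$; then $(1 - x_m) f_1 + x_m f_2 \in I(V_M)$ has leading monomial $x^\sigma$.

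The main technical difficulty is the lex bookkeeping hidden in the last construction: one must verify that every strictly smaller monomial $x^\alpha$ appearing in $f_1$ or $f_2$ satisfies $x_m x^\alpha \prec x^\sigma$. Since $m \notin \sigma$ and $\alpha \neq \sigma$, the first coordinate where $\sigma$ and $\alpha \cup \{m\}$ differ lies in $[m-1]$ and favors $\sigma$; this is the one step where the specific choice of lex order with $x_m$ smallest is essential, since it is precisely that choice which makes multiplication by $x_m$ compatible with the splitting $V_M = V_0 \sqcup V_1$. Granting this comparison, each of the constructions $x_m f$, $(1-x_m)f$, and $(1-x_m)f_1 + x_m f_2$ yields the desired leading monomial $x^\sigma$, which combined with the cardinality match forces equality of complexes.
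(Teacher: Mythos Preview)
Your proof is correct and follows essentially the same route as the paper. The paper establishes the decomposition for an arbitrary $0/1$-configuration $V$ (Lemma~\ref{lem:lex}) via exactly your constructions $(1-x_n)f^0+x_nf^1$ and $(x_n-1)f^0$ together with the same cardinality count $|\Scl(V^0)|+|\Scl(V^1)|=|V^0|+|V^1|=|V|$, and then specializes by observing that $(V_M)^0=V_{M\backslash m}$ and $(V_M)^1=V_{M/m}$.
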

If $M = \{\emptyset\}$, then $\Scl(M) = \{ \emptyset \}$.
This gives a recursive definition of $\Scl(M)$.

\newcommand\map{\mathrm{Map}}%
Recall that for a map $f : X \to Y$ between topological spaces, the
\Def{mapping cone} is the topological space $Y \sqcup_{f(X)} \mathrm{cone}(f(X))$.
Thus Theorem~\ref{thm:lex} states that $\Scl(M)$ is the mapping cone
associated to the inclusion
\begin{equation}\label{eqn:mapping}
        \Scl(M \backslash m) \cap \Scl(M / m)  \ \hookrightarrow \
        \Scl(M \backslash m) \ \cup \
        \Scl(M / m) \, .
\end{equation}

\newcommand\Bases[1]{\mathcal{B}(#1)}%
\newcommand\Mclass{\mathcal{M}}%
\newcommand\Nclass{\Mclass'}%
Theorem~\ref{thm:lex} also has the following combinatorial consequence.  Let
$\Mclass$ be the collection of matroids with groundsets contained in $\N$ and
write $\Bases{M}$ for the collection of bases of a matroid $M \in \Mclass$.

\begin{corollary}\label{cor:bijection}
  There is a unique family $\{ \bij_M \}_{M \in \Mclass}$ of bijections
  \[
    \bij_M : \Bases{M} \to \Scl(M) %
  \]
  such that for every $M \in \Mclass$, $m = m(M)$, and $B \in \Bases{M}$ one has
  \begin{align*}
    \bij_M(B)\ &\subseteq\ B \\
    \bij_{M}(B) \ &= \ \bij_{M \backslash m}(B) &\text{if } m \not \in B, \\
    \bij_{M}(B) \setminus m \ &= \ \bij_{M/m}(B \backslash m ) & \text{if } m \in B .
  \end{align*}
\end{corollary}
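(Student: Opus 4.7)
The plan is induction on $|E(M)|$, with the trivial base case $E(M) = \emptyset$ where $\Bases{M} = \Scl(M) = \{\emptyset\}$ and $\bij_M(\emptyset) = \emptyset$ is the only possibility. For the inductive step I let $m = m(M)$, assume $\bij_{M \setminus m}$ and $\bij_{M/m}$ are given uniquely by the inductive hypothesis, and follow the three cases of Theorem~\ref{thm:lex}.

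If $m$ is a coloop, every basis of $M$ contains $m$ and $\Scl(M) = \Scl(M \setminus m)$ is a simplicial complex on $E \setminus \{m\}$; in particular $m \notin \bij_M(B)$, so the identity $\bij_M(B) \setminus m = \bij_{M/m}(B \setminus m)$ forces $\bij_M(B) := \bij_{M/m}(B \setminus m)$, which is a bijection by induction. Symmetrically, if $m$ is a loop then no basis contains $m$, $\Scl(M) = \Scl(M/m)$, and $\bij_M := \bij_{M \setminus m}$ is forced. In both cases the inclusion $\bij_M(B) \subseteq B$ is inherited.

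In the generic case I partition $\Bases{M} = A \sqcup C$ where $A = \{B : m \notin B\}$ is identified with $\Bases{M \setminus m}$ under inclusion and $C = \{B : m \in B\}$ is identified with $\Bases{M/m}$ via $B \mapsto B \setminus m$. On $A$ the constraints force $\bij_M := \bij_{M \setminus m}$. For $B \in C$, writing $\sigma := \bij_{M/m}(B \setminus m) \in \Scl(M/m)$, the contraction constraint together with $\bij_M(B) \subseteq B$ leaves only the two candidates $\sigma$ and $\sigma \cup \{m\}$, and Theorem~\ref{thm:lex} shows $\sigma \cup \{m\} \in \Scl(M)$ exactly when $\sigma \in \Scl(M \setminus m) \cap \Scl(M/m)$. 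I therefore set
\[
    \bij_M(B) \ := \ \begin{cases}
        \sigma \cup \{m\} & \text{if } \sigma \in \Scl(M \setminus m) \cap \Scl(M/m), \\
        \sigma & \text{otherwise,}
    \end{cases}
\]
and verify that this is a bijection $C \to \Scl(M) \setminus \Scl(M \setminus m)$ by matching it against the decomposition
\[
    \Scl(M) \setminus \Scl(M \setminus m) \ = \ \bigl(\Scl(M/m) \setminus \Scl(M \setminus m)\bigr) \ \sqcup \ \{m\} \ast \bigl(\Scl(M \setminus m) \cap \Scl(M/m)\bigr)
\]
implicit in Theorem~\ref{thm:lex}, combined with the fact that $\bij_{M/m}$ is a bijection from $C$ onto $\Scl(M/m)$.

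The subtle point, and I view it as the main obstacle, is uniqueness in the subcase $\sigma \in \Scl(M \setminus m) \cap \Scl(M/m)$: the three displayed conditions by themselves do not eliminate the choice $\bij_M(B) = \sigma$. Here the forcing must come from bijectivity. By the inductive hypothesis, $\sigma = \bij_{M \setminus m}(B')$ for a unique $B' \in A$, and $\bij_M(B') = \sigma$ has already been fixed; choosing $\bij_M(B) = \sigma$ would destroy injectivity, so $\bij_M(B) = \sigma \cup \{m\}$ is the only admissible value. This completes the induction and simultaneously establishes existence and uniqueness.
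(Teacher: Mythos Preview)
Your proof is correct and follows the same inductive construction as the paper's, with the added merit of spelling out explicitly why bijectivity (rather than the three displayed constraints alone) forces the choice $\sigma \cup \{m\}$ in the ambiguous subcase. One minor notational slip: in your displayed decomposition of $\Scl(M) \setminus \Scl(M\setminus m)$, the second piece should be $\{\sigma \cup \{m\} : \sigma \in \Scl(M\setminus m) \cap \Scl(M/m)\}$ rather than the full cone $m \ast (\cdots)$, since by the paper's convention the cone also contains the base faces, which lie in $\Scl(M\setminus m)$.
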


In Section~\ref{sec:lex}, we prove \Cref{thm:lex} and recursively
construct the bijection in \Cref{cor:bijection} explicitly.

We call a subclass $\Nclass \subseteq \Mclass$ of matroids such that $M
\backslash m(M), M / m(M) \in \Nclass$ for every $M \in \Nclass$ \Def{max-minor-closed} and observe that \Cref{cor:bijection} provides a tool to understand the standard complex for matroids in the class $\Nclass$ by explicitly constructing a family $\{\bij_M\}_{M\in\Mclass'}$ of bijections and then showing that this family has the proposed properties.

In the second part of the paper, we implement this idea in the case of the
(max-)minor-closed class of lattice path matroids. Lattice paths in $\Z^2$
with only north or east steps are partially ordered by `staying weakly below'.
It was noted by Bonin, de Mier, and Noy~\cite{BMN} that the collection of
lattice paths between two bounding paths
$U$ (for upper boundary) and $L$ (for lower boundary) with common endpoints
are the bases for a (transversal) matroid, called a \Def{lattice path
matroid}. The combinatorics of lattice path matroids (and lattice path in
general) has been of considerable interest in recent years; see, for
example,~\cite{Ard2003, BdM,  Schweig2, Schweig1, ER, Kolja}.  It turns out
that our construction of compatible bijections on the (max-)minor-closed class
of lattice path matroids relates to classical bijective combinatorics.  We
explicitly describe the family $\bij_M$ in this case in \Cref{sec:lattices};
it can be nicely described in terms of lattice path combinatorics and is a
well-known statistic in certain cases (\Cref{thm:mainsimple,,thm:mainhard}).
Surprisingly, we will observe that the bijection in the case of lattice path
matroids does only depend on the lower boundary rather than on the complete
set of bases in this case (\Cref{cor:mainhard}).  For trivial lower and upper
boundaries, the standard complex in question appeared already in the literature
and was discussed in~\cite{Anstee}, see also~\cite[Thm.~1.2]{Hegedus}.

\section{0/1-configurations and standard complexes}
\label{sec:general}

\newcommand\init{\mathrm{in}_{\preceq}}%
\newcommand\Mon{\mathrm{Mon}}%
\newcommand\std{\mathrm{Std}}%
Let $\R[\x] = \R[x_1,\dots,x_n]$ be the polynomial ring in $n$ variables.  A
\Def{term order} on the collection of monomials $\Mon_n = \{ \x^\alpha :
\alpha \in \N^n\}$ is a total order $\preceq$ such that $0 \preceq \x^\alpha$
and $\x^\alpha \preceq \x^\beta$ implies $\x^\alpha \x^\gamma \preceq \x^\beta
\x^\gamma$ for all $\alpha,\beta,\gamma \in \N^n$.  For a given term order
$\preceq$ and polynomial $f \neq 0$, we denote by $\init(f)$ the leading term
of $f$ and for an ideal $I \subseteq \R[\x]$ the \Dfn{initial ideal}
$\init(I)$ is the monomial ideal spanned by $\{ \init(f) : f \in I, f
\neq 0 \}$.  The \Dfn{standard monomials} of $I$ with respect to $\preceq$ are
$\std_\preceq(I) := \{ \x^\alpha : \x^\alpha \not\in \init(I) \}$. This is a
possibly infinite collection of monomials closed under divisibility. If $I =
I(A)$ is the vanishing ideal of a algebraic variety $A \subset \R^n$, then
$\std_\preceq(I)$ is
a basis for the vector space of polynomial functions $\R[A] := \R[\x]/I$ on
$A$.  Every finite collection of points $V \subset \R^n$ is an algebraic
variety and the knowledge of $\std_\preceq(I(V))$ is of the essence for
example in polynomial optimization~\cite{Laurent07} or the design of
experiments~\cite{robbiano98}. We refer the reader to the wonderful
book~\cite{CLO} for Gr\"obner basis theory in general.

If $V \subseteq \ZO^n$ is a \Def{$\mathbf{0/1}$-point configuration}, then $x_i^2 - x_i$
vanishes on $V$ for every $i$. Independent of the term order, we have $x^2_i
\in \init(I(V))$ which shows that 
\begin{proposition}
    If $V$ is a $0/1$-point configuration, then $\std_\preceq(I(V))$ is a collection
    of squarefree monomials.
\end{proposition}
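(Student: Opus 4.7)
The plan is to leverage the setup preceding the statement: for every $i$, the polynomial $x_i^2 - x_i$ lies in the vanishing ideal $I(V)$ because every coordinate of every point of $V$ is $0$ or $1$. First I would verify that, independent of the term order $\preceq$, the leading term of $x_i^2 - x_i$ is $x_i^2$: by the term-order axioms one has $1 \preceq x_i$, and multiplying through by $x_i$ yields $x_i \preceq x_i^2$; since $\preceq$ is a total order and $x_i \neq x_i^2$, this inequality is strict, so $\init(x_i^2 - x_i) = x_i^2$, and hence $x_i^2 \in \init(I(V))$ for every $i \in [n]$.

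The second step is to use the fact that $\init(I(V))$ is by construction a monomial ideal, hence closed under divisibility: a monomial $\x^\alpha$ lies in $\init(I(V))$ whenever it is divisible by one of its generators. If $\x^\alpha$ is not squarefree, then $x_i^2 \mid \x^\alpha$ for some $i$, and therefore $\x^\alpha \in \init(I(V))$. Taking the contrapositive, every monomial of $\std_\preceq(I(V)) = \{ \x^\alpha : \x^\alpha \not\in \init(I(V)) \}$ must be squarefree, which is precisely the statement of the proposition.

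I do not expect any genuine obstacle: the entire content is the classical observation that the polynomials $\{ x_i^2 - x_i \}_{i \in [n]}$ force $x_i^2$ into the initial ideal regardless of $\preceq$, combined with the fact that monomial ideals are closed under divisibility. The only step that deserves explicit justification is the assertion $x_i \prec x_i^2$, which follows immediately from the multiplicative monotonicity axiom applied to $1 \prec x_i$.
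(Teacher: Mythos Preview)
Your proposal is correct and is essentially the same argument the paper gives (in one line just before the proposition): both observe that $x_i^2 - x_i \in I(V)$ forces $x_i^2 \in \init(I(V))$ for every term order, so no standard monomial can be divisible by any $x_i^2$. You have simply spelled out the justification for $\init(x_i^2 - x_i) = x_i^2$ and the divisibility step that the paper leaves implicit.
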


For $\tau \subseteq [n]$, let us write $\x^\tau = \prod_{i \in \tau} x_i$.  In
light of the previous proposition, we define the \Def{standard complex} of a
$0/1$-configuration $V$ with respect to a term order $\preceq$ as
\[
    \ScP(V) \ := \ \{ \tau \subseteq [n] : \x^\tau \in \std_\preceq(I(V)) \} \, .
\]
For a finite configuration $V \subset \R^n$, every function $f : V \to \R$
is the restriction of a polynomial on $\R^n$. This shows that $\R[V] \cong
\R^V$ and gives us
\begin{proposition}\label{prop:card_std}
    For any $0/1$-configuration $V \subseteq \ZO^n$, the standard complex
    $\ScP(V)$ is a simplicial complex on $[n]$ with $|\ScP(V)| = |V|$.
\end{proposition}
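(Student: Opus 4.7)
The plan is to verify the two assertions in the proposition separately: that $\ScP(V)$ is a simplicial complex, and that its number of faces equals $|V|$.

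For the first assertion, I would rely on the general fact that standard monomials are closed under divisibility: if $\x^\alpha \in \std_\preceq(I)$ and $\x^\beta$ divides $\x^\alpha$, then $\x^\beta \in \std_\preceq(I)$ as well (for otherwise $\x^\beta \in \init(I)$ would force $\x^\alpha = \x^\beta \cdot \x^{\alpha-\beta} \in \init(I)$, a contradiction). Combined with the preceding proposition that guarantees squarefreeness when $V \subseteq \{0,1\}^n$, this immediately gives that $\tau \in \ScP(V)$ and $\sigma \subseteq \tau$ imply $\sigma \in \ScP(V)$, which is the defining property of a simplicial complex on $[n]$.

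For the cardinality statement, the key input is the isomorphism $\R[V] \cong \R^V$ highlighted in the paragraph preceding the proposition, which yields $\dim_\R \R[V] = |V|$. A standard fact from Gr\"obner basis theory is that, for any term order $\preceq$, the images of the standard monomials form an $\R$-basis of $\R[\x]/I(V) = \R[V]$; hence $|\std_\preceq(I(V))| = |V|$. Finally, since all standard monomials are squarefree by the previous proposition, the map $\tau \mapsto \x^\tau$ is a bijection between the faces of $\ScP(V)$ and $\std_\preceq(I(V))$, so $|\ScP(V)| = |V|$.

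Neither step is really an obstacle; both facts I am invoking (closure under divisibility, and standard monomials forming a vector space basis of the quotient) are classical, and can be cited from~\cite{CLO}. If a self-contained argument is desired, the only slightly delicate point is the basis property: spanning follows from iteratively replacing any leading term $\init(f)$ of a nonzero element $f \in \R[\x]$ that lies in $\init(I)$ by subtracting an appropriate multiple from $I$, while linear independence follows because any nontrivial $\R$-linear relation among standard monomials would itself be an element of $I$ with leading term a standard monomial, contradicting the definition. Everything else is bookkeeping.
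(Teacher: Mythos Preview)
Your argument is correct and matches the paper's approach exactly: the paper also derives the simplicial-complex property from the earlier remark that standard monomials are closed under divisibility together with the squarefreeness proposition, and it obtains the cardinality from $\R[V]\cong\R^V$ combined with the fact that standard monomials form a basis of $\R[\x]/I(V)$. You have simply spelled out in full the justifications that the paper leaves implicit in the surrounding text.
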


Let $M$ be a rank-$r$ matroid on groundset $[n]$ and let $\Bases{M}$ be its
collection of bases. The \Def{basis configuration} of $M$ is the point
configuration
\[
    V_M \ := \ \{ \e_B  \in \ZO^n : B \in \Bases{M} \} \, ,
\]
where $\e_B$ is the characteristic vector of $B \subseteq [n]$. This is
precisely the collection of vertices of the matroid base polytope $P_M$ of $M$
that gives a prominent geometric representation of $M$;
see~\cite{edmonds,postnikov}. For a term order $\preceq$, we write
$\ScP(M) := \ScP(V_M)$. We record the following consequence of
Proposition~\ref{prop:card_std}.

\begin{corollary}\label{cor:sc_bases}
    For any matroid $M$ we have $|\ScP(M)| = |\Bases{M}|$ for any term order
    $\preceq$.
\end{corollary}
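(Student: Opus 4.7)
The plan is to deduce the corollary directly from Proposition~\ref{prop:card_std} by unwinding the definition of the basis configuration $V_M$. By definition, $\ScP(M) = \ScP(V_M)$ where $V_M = \{ \e_B : B \in \Bases{M}\} \subseteq \{0,1\}^n$. Proposition~\ref{prop:card_std} then gives $|\ScP(M)| = |\ScP(V_M)| = |V_M|$, independent of the term order $\preceq$. The only remaining ingredient is the cardinality identity $|V_M| = |\Bases{M}|$.

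To establish $|V_M| = |\Bases{M}|$, I would note that the assignment $B \mapsto \e_B$ is a bijection from $\Bases{M}$ to $V_M$. Injectivity is immediate: the characteristic vector $\e_B \in \{0,1\}^n$ uniquely determines the subset $B = \{i \in [n] : (\e_B)_i = 1\}$, so distinct bases yield distinct points of $V_M$. Surjectivity holds by the very definition of $V_M$.

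Chaining these two observations yields $|\ScP(M)| = |V_M| = |\Bases{M}|$, as claimed. There is essentially no obstacle: all the substantive content, namely the isomorphism $\R[V] \cong \R^V$ together with the squarefreeness of standard monomials for $0/1$-configurations, is already packaged into Proposition~\ref{prop:card_std}. The corollary is effectively a translation of that result from the language of point configurations into the language of matroids.
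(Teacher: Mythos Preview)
Your argument is correct and matches the paper's approach: the corollary is simply stated as a consequence of Proposition~\ref{prop:card_std}, and your proof spells out exactly that deduction together with the trivial bijection $B \mapsto \e_B$ between $\Bases{M}$ and $V_M$.
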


\newcommand\circuits{\mathcal{C}}%
Recall that the circuits $\circuits(M)$ of $M$ are
the inclusion-minimal sets $C \subseteq [n]$ such that $C \not\in \Ind(M)$.

\begin{proposition}[{\cite[Prop.~4.3.]{GS17}}]\label{prop:V_M-ideal}
    Let $M$ be rank-$r$ matroid on groundset $[n]$. Then the vanishing ideal
    of $V_M$ is generated by the polynomials $x_i^2 - x_i$ for $i = 1,\dots,n$
    as well as $x_1 + \cdots + x_n - r$ and $\x^C$ for $C \in \circuits(M)$.
\end{proposition}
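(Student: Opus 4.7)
The plan is to prove both inclusions $J \subseteq I(V_M)$ and $I(V_M) \subseteq J$, where $J$ denotes the ideal generated by the listed polynomials, and to handle the harder inclusion via a dimension count. Since $V_M \subseteq \R^n$ is a finite variety, Lagrange interpolation gives a vector-space isomorphism $\R[\x]/I(V_M) \cong \R^{V_M}$, so $\dim_\R \R[\x]/I(V_M) = |V_M| = |\Bases{M}|$. Combined with $J \subseteq I(V_M)$, the bound $\dim_\R \R[\x]/J \leq |\Bases{M}|$ would force equality throughout, hence $J = I(V_M)$.

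The inclusion $J \subseteq I(V_M)$ is the easy half: just verify that each generator vanishes on every $\e_B$ with $B \in \Bases{M}$. The relations $x_i^2 - x_i$ vanish on all of $\ZO^n$; the linear polynomial $x_1 + \cdots + x_n - r$ encodes that every basis has exactly $r$ elements; and $\x^C$ vanishes on $\e_B$ because a circuit $C$ is dependent and therefore not contained in any basis.

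For the dimension bound, I would whittle down an obvious spanning set. The quadratic relations first reduce the quotient to being spanned by squarefree monomials $\x^\sigma$. Since $\x^C \in J$ for every circuit, every $\sigma$ containing a circuit satisfies $\x^\sigma \equiv 0$, so the remaining spanning set is $\{\x^\sigma : \sigma \in \Ind(M)\}$. The linear relation now has to eliminate independent sets of size smaller than $r$. Multiplying $\sum_i x_i = r$ by $\x^\sigma$ and using $x_j \x^\sigma \equiv \x^\sigma$ for $j \in \sigma$ (immediate from the squarefree relations) gives the key identity
\[
(r - |\sigma|)\, \x^\sigma \ \equiv\ \sum_{j \notin \sigma} \x^{\sigma \cup \{j\}} \pmod{J}.
\]
When $|\sigma| < r$, the scalar $r - |\sigma|$ is an invertible real number, so $\x^\sigma$ becomes an $\R$-linear combination of strictly larger independent-set monomials (the terms where $\sigma \cup \{j\}$ is dependent already vanish modulo $J$ because they contain a circuit). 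Iterating this reduction on $r - |\sigma|$ expresses every $\x^\sigma$ with $\sigma \in \Ind(M)$ in terms of $\{\x^B : B \in \Bases{M}\}$, giving $\dim_\R \R[\x]/J \leq |\Bases{M}|$ and closing the argument.

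The main obstacle is this last elimination step: getting the rank relation, which is a single linear identity, to interact correctly with the combinatorics of independent sets. The whole reduction rests on the displayed identity and on the fact that $r - |\sigma|$ is invertible over $\R$. Everything else — the squarefree reduction, the circuit reduction, and the final dimension chain $|\Bases{M}| = \dim_\R \R[\x]/I(V_M) \leq \dim_\R \R[\x]/J \leq |\Bases{M}|$ — is formal once the rank-reduction identity is in place.
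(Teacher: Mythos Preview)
Your argument is correct: the easy inclusion $J \subseteq I(V_M)$ is immediate, and the displayed identity $(r-|\sigma|)\,\x^\sigma \equiv \sum_{j\notin\sigma}\x^{\sigma\cup\{j\}}$ together with the circuit relations does reduce the spanning set to $\{\x^B : B\in\Bases{M}\}$, after which the dimension count closes.

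There is, however, nothing in the paper to compare it against. The proposition is quoted from an external reference (the citation~\cite[Prop.~4.3]{GS17} in the statement header) and the paper gives no proof of its own; it only uses the proposition to deduce Corollary~\ref{cor:sub_ind}. Your write-up is the standard proof one would give for this fact, and it stands on its own.
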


Proposition~\ref{prop:V_M-ideal} yields the following result. 

\begin{corollary}\label{cor:sub_ind}
    Let $M$ be a matroid on groundset $[n]$ and $\preceq$ a term order. Then
    $\ScP(M) \subseteq \Ind(M)$ is a subcomplex.
\end{corollary}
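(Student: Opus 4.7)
The plan is to unpack the two ingredients already in place: that standard monomials form a collection closed under divisibility (so $\ScP(M)$ is automatically a simplicial complex), and that the vanishing ideal $I(V_M)$ contains an explicit squarefree monomial generator for every circuit of $M$.

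First I would note that by the very definition of $\std_\preceq(I(V_M))$ as the complement in $\Mon_n$ of the monomial ideal $\init_\preceq(I(V_M))$, the collection $\std_\preceq(I(V_M))$ is closed under taking divisors. Translating to subsets via $\tau \mapsto \x^\tau$, this means $\ScP(M)$ is closed under taking subsets, and is therefore a simplicial complex on $[n]$. So it suffices to check the face inclusion $\ScP(M) \subseteq \Ind(M)$.

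Next I would invoke Proposition~\ref{prop:V_M-ideal}: for every circuit $C \in \circuits(M)$ the monomial $\x^C$ lies in $I(V_M)$. Since $\x^C$ is already a monomial, it equals its own leading term under any term order, so $\x^C \in \init_\preceq(I(V_M))$. Because $\init_\preceq(I(V_M))$ is a monomial ideal, every multiple $\x^C \cdot \x^\gamma$ also lies in it.

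Now I would argue by contrapositive. Suppose $\tau \subseteq [n]$ is not independent. Then $\tau$ contains some circuit $C \in \circuits(M)$, so $\x^\tau = \x^C \cdot \x^{\tau \setminus C}$ lies in $\init_\preceq(I(V_M))$, which forces $\tau \notin \ScP(M)$. Equivalently, every face of $\ScP(M)$ is independent, proving $\ScP(M) \subseteq \Ind(M)$. There is no real obstacle here; the work was already done in Proposition~\ref{prop:V_M-ideal} by exhibiting the circuit monomials as explicit elements of the vanishing ideal, and the present corollary is just the order-independent shadow of that fact.
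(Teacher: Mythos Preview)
Your proof is correct and follows essentially the same approach as the paper: both observe that the circuit monomials $\x^C$ from Proposition~\ref{prop:V_M-ideal} lie in $\init_\preceq(I(V_M))$ regardless of the term order, so every face of $\ScP(M)$ avoids all circuits and is therefore independent. Your version is simply a bit more explicit about why being a monomial forces $\x^C$ into the initial ideal and spells out the contrapositive.
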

\begin{proof}
    Note that $\x^C \in \init(I(V_M))$ for $C \in \circuits(M)$ independent of
    the term order. Thus $\ScP(M)$ is a collection of sets $\tau \subseteq
    [n]$ such that $C \not\subseteq \tau$ for all $C \in \circuits(C)$. This
    means that $\tau$ is an independent set of $M$.
\end{proof}

The matroid $M^*$ dual to $M$ is the matroid with bases $\Bases{M^*} = \{ [n]
\setminus B : B \in \Bases{M} \}$.

\begin{proposition}\label{prop:dual}
    Let $M$ be a matroid and $\preceq$ a term order. Then $\ScP(M) =
    \ScP(M^*)$.
\end{proposition}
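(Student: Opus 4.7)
The plan is to exhibit a ring automorphism of $\R[\x]$ that carries $I(V_M)$ to $I(V_{M^*})$ and preserves initial ideals with respect to any term order. Since $\e_{[n]\setminus B}=\mathbf{1}-\e_B$, the basis configuration of $M^*$ is the image of $V_M$ under the affine involution $v\mapsto\mathbf{1}-v$. Pulling back polynomial functions along this map gives $I(V_{M^*})=\sigma(I(V_M))$, where $\sigma\colon\R[\x]\to\R[\x]$ is the ring automorphism determined by $x_i\mapsto 1-x_i$. Indeed, $\sigma(f)(v)=f(\mathbf{1}-v)$, so $\sigma$ is an involution that exchanges the two vanishing ideals.

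The main step, and the only one really requiring verification, is that $\sigma$ preserves $\init$ up to a nonzero scalar on every element of $\R[\x]\setminus\{0\}$. For each monomial $\x^\alpha$,
\[
    \sigma(\x^\alpha)\ =\ \prod_i (1-x_i)^{\alpha_i}\ =\ (-1)^{|\alpha|}\x^\alpha\ +\ \text{(terms whose monomials strictly divide $\x^\alpha$)}.
\]
Since any term order refines divisibility (from $1\preceq\x^\gamma$ and multiplicativity one has $\x^\beta\preceq\x^{\beta+\gamma}$, strictly if $\gamma\neq 0$), each such lower term is $\prec\x^\alpha$. Hence $\init(\sigma(\x^\alpha))=(-1)^{|\alpha|}\x^\alpha$, and by linearity, if $\init(f)=c\,\x^\alpha$ then $\init(\sigma(f))=(-1)^{|\alpha|}c\,\x^\alpha$, a nonzero scalar multiple of $\init(f)$.

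It follows that a monomial $\x^\alpha$ lies in $\init(I(V_M))$ iff it lies in $\init(\sigma(I(V_M)))=\init(I(V_{M^*}))$, so the two initial ideals coincide as sets of monomials. Taking complements within $\Mon_n$ yields $\std_\preceq(I(V_M))=\std_\preceq(I(V_{M^*}))$, hence $\ScP(M)=\ScP(M^*)$. The argument is intrinsically short because the triangularity of $\sigma$ with respect to $\preceq$ is encoded in the term-order axioms themselves; no combinatorial input from the matroid structure is needed beyond the identification of $V_{M^*}$ with $\mathbf{1}-V_M$.
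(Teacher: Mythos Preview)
Your proof is correct and follows essentially the same approach as the paper: both exploit that the substitution $x_i\mapsto 1-x_i$ is upper-triangular with respect to divisibility, hence with respect to any term order. The paper packages this as a lemma about a single coordinate reflection $T_i$ (and finishes with a cardinality count rather than your direct two-sided leading-term computation), then applies it $n$ times, whereas you apply the full involution $\sigma$ in one shot; the underlying idea is identical.
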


It follows from the definition that $V_{M^*} = (1,\dots,1) - V_M$. The next
lemma then yields Proposition~\ref{prop:dual}.
Denote by $T_i$ the reflection in the hyperplane $\{\x : x_i = \frac{1}{2} \}$. Thus $T_i$ maps $v \in \ZO^n$  to $(v_1,\dots,v_{i-1}, 1 - v_i,
v_{i+1},\dots, v_n) \in \ZO^n$.

\begin{lemma}\label{lem:SC_reflection}
    Let $V \subset \ZO^n$ be a $0/1$-configuration. Then 
    $\ScP(V) = \ScP(T_i(V))$ for every $i = 1,\dots,n$.
\end{lemma}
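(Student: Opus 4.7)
The plan is to produce a ring involution $\phi$ on $\R[\x]$ that maps $I(V)$ onto $I(T_i V)$ and that, despite not preserving the term order in general, preserves the leading monomial of every polynomial whose support is squarefree. Since $V \subseteq \{0,1\}^n$ the ideal $J := \langle x_j^2 - x_j : j \in [n] \rangle$ sits inside $I(V)$, and $\init(J) = \langle x_j^2 : j \rangle$ already accounts for every non-squarefree element of $\init(I(V))$. Consequently $\ScP(V)$ is determined by the squarefree part of $\init(I(V))$, and it suffices to show this squarefree part agrees for $V$ and $T_i V$.

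First I would set $\phi(x_j) = x_j$ for $j \neq i$ and $\phi(x_i) = 1 - x_i$. A direct substitution gives $\phi(f)(w) = f(T_i w)$ for every $w \in \{0,1\}^n$, so $\phi$ is an involutive ring automorphism that restricts to a bijection $I(V) \to I(T_i V)$. It also preserves $J$, and on a squarefree monomial it acts by $\phi(\x^\tau) = \x^\tau$ if $i \notin \tau$ and $\phi(\x^\tau) = \x^{\tau \setminus i} - \x^\tau$ if $i \in \tau$; in either case the output is squarefree.

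The key technical step, and the main obstacle, is the claim that for every nonzero $g \in \R[\x]$ supported on squarefree monomials one has $\init(\phi(g)) = \init(g)$. Writing $g = \sum_\sigma c_\sigma \x^\sigma$, a short bookkeeping argument shows that the coefficient of $\x^\tau$ in $\phi(g)$ equals $-c_\tau$ when $i \in \tau$ and $c_\tau + c_{\tau \cup i}$ when $i \notin \tau$. For $\x^\tau \succ \init(g) =: \x^{\tau_0}$, both $c_\tau$ and (when applicable) $c_{\tau \cup i}$ vanish, the latter because $\x^{\tau \cup i}$ is a proper multiple of $\x^\tau$ and hence $\x^{\tau \cup i} \succ \x^\tau \succ \x^{\tau_0}$; at $\tau = \tau_0$ the same observation reduces the coefficient to $\pm c_{\tau_0} \neq 0$.

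To assemble the proof, fix a squarefree $\x^\tau \in \init(I(V))$ and pick $f \in I(V)$ with $\init(f) = \x^\tau$. Reducing $f$ modulo the Gr\"obner basis $\{x_j^2 - x_j\}$ of $J$ replaces each monomial by a divisor of itself, hence leaves $\x^\tau$ untouched and takes every other term of $f$ (which is $\prec \x^\tau$) to a squarefree monomial still $\prec \x^\tau$; therefore one obtains a squarefree $\tilde f \in f + J \subseteq I(V)$ with $\init(\tilde f) = \x^\tau$. Applying the key step to $\tilde f$ gives $\init(\phi(\tilde f)) = \x^\tau$, and since $\phi(\tilde f) \in I(T_i V)$ we conclude $\x^\tau \in \init(I(T_i V))$. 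The reverse inclusion follows from $\phi^{-1} = \phi$ and $T_i^{-1} = T_i$, so $\ScP(V) = \ScP(T_i V)$.
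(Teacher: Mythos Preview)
Your argument is correct and rests on the same idea as the paper's: the ring involution $\phi$ given by $x_i \mapsto 1-x_i$ sends $I(V)$ to $I(T_iV)$ and preserves leading monomials. The paper's version is simply shorter because it does not restrict to squarefree polynomials. For an arbitrary monomial $\x^\alpha$, one has $T_i(\x^\alpha) = (-1)^{\alpha_i}\x^\alpha + (\text{strict divisors of } \x^\alpha)$, so for any $f$ the polynomial $T_i(f)$ has the same leading monomial as $f$; this immediately gives $\init(I(V)) \subseteq \init(I(T_iV))$ without first reducing modulo $J = \langle x_j^2 - x_j\rangle$. Your reduction step and the careful squarefree bookkeeping are valid but unnecessary detours. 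The paper then closes with the cardinality count $|\ScP(V)| = |V| = |T_iV| = |\ScP(T_iV)|$ rather than invoking $\phi^{-1} = \phi$ as you do; either finish is fine, and in fact your symmetry argument is arguably cleaner.
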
        
\begin{proof}
    Note that $T_i$ acts on $\R[\x]$ by $T_i(f)(x_1,\dots,x_n) =
    f(x_1,\dots,x_{i-1},1-x_i,x_{i+1},\dots,x_n)$. In particular $I(T_i(V)) =
    T_i(I(V))$. Let $\x^\alpha$ be a monomial. Then $T_i(\x^\alpha) -
    \x^\alpha$ is a polynomial all whose terms strictly divide $\x^\alpha$ and
    thus are strictly smaller in any term order. It follows that $\init(I(V))
    \subseteq \init(I(T_i(V)))$ and thus $\ScP(V) \subseteq \ScP(T_i(V))$.
    However $|V| = |T_i(V)| = |\ScP(T_i(V))| \ge |\ScP(V)| = |V|$, which
    proves the claim.
\end{proof}

\section{Lex order, mapping cones, and bijections}\label{sec:lex}

Let $\preceq$ be the lexicographic term order with $x_1 \succ x_2 \succ \cdots
\succ x_n$. That is $\x^\alpha \prec \x^\beta$ if for the smallest $i$ for
which $\alpha_i \neq \beta_i$ we have $\alpha_i > \beta_i$. The lexicographic
order is strongly tied to elimination in ideals and projections of algebraic
sets~\cite[Ch.~3]{CLO} and lends itself to inductive or recursive arguments.

For a $0/1$-configuration $V \subseteq \ZO^n$, we define
\[
    V^a \ := \ \{ v \in \ZO^{n-1} : (v,a) \in V \} \quad \text{ for } a = 0,1
    .
\]

Following lemma gives a recursive description of the standard monomials for a
$0/1$-point configuration. The recursive structure of standard monomials with
respect to the lexicographic term order has been noted in various contexts;
see, for example~\cite{Lederer} for the description of the standard monomials
for general point sets and the discussions in the last section of that paper.
In the context of VC-dimensions, Anstee, R\'{o}nyai, and Sali call $\Scl(V)$
\emph{order shattering} and give a similar recursive description (and proof)
in Theorem 4.3 of~\cite{Anstee}.

\begin{lemma}\label{lem:lex}
    Let $V \subset \ZO^n$ be a non-empty $0/1$-configuration and let $\prec$
    be the lexicographic term order with $x_1 \succ x_2 \succ \cdots \succ
    x_n$.  Then
    \[
        \Scl(V) \ = \ 
        \Scl(V^0) \ \cup \
        \Scl(V^1) \ \cup \
        ( n \ast (\Scl(V^0) \cap \Scl(V^1) ) ) \, .
    \]
\end{lemma}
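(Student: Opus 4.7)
The plan is to pair a cardinality count with a one-sided containment. By \Cref{prop:card_std} and the partition of $V$ on the last coordinate, $|\Scl(V)| = |V| = |V^0| + |V^1| = |\Scl(V^0)| + |\Scl(V^1)|$. On the right-hand side of the claimed identity, $\Scl(V^0)$ and $\Scl(V^1)$ consist of subsets of $[n-1]$, while the ``upper layer'' $\{\sigma \cup \{n\} : \sigma \in \Scl(V^0)\cap\Scl(V^1)\}$ of the cone is disjoint from both. A short inclusion-exclusion then shows that the right-hand side also has cardinality $|\Scl(V^0)|+|\Scl(V^1)|$. It therefore suffices to show that every face of the right-hand side lies in $\Scl(V)$.

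For that inclusion I would use $x_n^2 - x_n \in I(V)$ to write each $f \in I(V)$ uniquely as $f \equiv f_0 + x_n f_1 \pmod{I(V)}$ with $f_0, f_1 \in \R[x_1,\dots,x_{n-1}]$. Specializing $x_n \mapsto 0$ and $x_n \mapsto 1$ yields $f_0 \in I(V^0)$ and $f_0 + f_1 \in I(V^1)$. Since $x_n$ is the smallest variable in the order, a monomial $\x^\alpha$ free of $x_n$ satisfies $\x^\alpha \prec x_n \x^\beta$ if and only if $\x^\alpha \preceq \x^\beta$ in the induced lex order on $\R[x_1,\dots,x_{n-1}]$; in particular $\init(f_1) \prec x_n \init(f_1)$, and $\init(f) = \max\bigl(\init(f_0),\, x_n \init(f_1)\bigr)$.

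Now suppose $\tau$ lies in the right-hand side and, for contradiction, $\x^\tau = \init(f)$ for some nonzero $f \in I(V)$. If $n \not\in \tau$, then $\tau \in \Scl(V^0) \cup \Scl(V^1)$; since $\x^\tau$ is free of $x_n$, one must have $\init(f_0) = \x^\tau$ and $\init(f_1) \prec \x^\tau$. Hence $f_0 \in I(V^0)$ and $f_0 + f_1 \in I(V^1)$ both have leading term $\x^\tau$, contradicting membership of $\tau$ in either $\Scl(V^0)$ or $\Scl(V^1)$. If $n \in \tau$, write $\tau = \sigma \cup \{n\}$ with $\sigma \in \Scl(V^0) \cap \Scl(V^1)$; then $\init(f) = x_n \x^\sigma$ forces $\init(f_1) = \x^\sigma$ and $\init(f_0) \preceq \x^\sigma$. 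If $\init(f_0) = \x^\sigma$, then $f_0 \in I(V^0)$ shows $\sigma \not\in \Scl(V^0)$; otherwise $\init(f_0) \prec \x^\sigma$, the leading terms do not cancel, and $\init(f_0+f_1) = \x^\sigma$ exhibits $\sigma \not\in \Scl(V^1)$ via $f_0 + f_1$.

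The main care needed is tracking precisely when the leading monomials of $f_0$ and $f_1$ can or cannot cancel in $f_0+f_1$; the fact that $x_n$ is the lex-smallest variable organizes these comparisons uniformly across cases. With the inclusion in hand, the matching cardinalities from the first paragraph upgrade it to the claimed equality.
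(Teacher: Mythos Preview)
Your proof is correct. Both you and the paper pair a one-sided inclusion with the cardinality count $|\Scl(V)| = |V^0| + |V^1| = |\text{RHS}|$, but you establish the \emph{opposite} inclusion. The paper shows $\Scl(V) \subseteq \text{RHS}$ by constructing, for each $\tau$ outside the right-hand side, an explicit $f_\tau \in I(V)$ with $\init(f_\tau) = \x^\tau$, using the combinations $(1-x_n)f^0_\tau + x_n f^1_\tau$ and $(x_n-1)f^0_\sigma$. You instead show $\text{RHS} \subseteq \Scl(V)$ by starting from an arbitrary witness $f \in I(V)$, normalizing it to $x_n$-degree at most one, and specializing $x_n \mapsto 0,1$ to extract witnesses in $I(V^0)$ and $I(V^1)$. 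The paper's direction is more constructive (it essentially exhibits generators for $\init(I(V))$), while yours is a clean decomposition argument; your careful case split on whether $\init(f_0)$ equals or is strictly below $\x^\sigma$ is exactly the right place to track possible cancellation in $f_0 + f_1$, and this is where the choice of $x_n$ as the lex-smallest variable does real work. One small notational point: writing ``$f \equiv f_0 + x_n f_1 \pmod{I(V)}$'' is awkward since $f \equiv 0 \pmod{I(V)}$; what you actually mean and use is reduction modulo $x_n^2 - x_n$, which keeps the polynomial in $I(V)$ and preserves the squarefree leading term $\x^\tau$.
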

\begin{proof}
    \renewcommand\S{\mathcal{S}'}%
    Let us denote the right-hand side by $\S$. We first show that if $\tau
    \not\in \S$, then there is a polynomial $f_\tau \in I(V)$ with
    $\init(f_\tau) = \x^\tau$. Let $\tau \not\in \S$ be inclusion-minimal.  If
    $n \not\in \tau$, then $\tau \not\in \Scl(V^0) \cup \Scl(V^1)$ and there
    are polynomials $f^i_\tau \in I(V^i)$ with leading term $\x^\tau$ for
    $i=0,1$. By virtue of the lexicographic term order, the polynomial $f_\tau
    = (1-x_n)f^0_\tau + x_n f^1_\tau$ also has leading term $\x^\tau$ and
    vanishes on $V$.

    If $n \in \tau$, then $\sigma := \tau \setminus \{n\}$ is contained in,
    say, $\Scl(V^1)$. Now, $\sigma$ cannot be contained in $\Scl(V^0)$ as
    well, as it would imply $\tau \in n \ast (\Scl(V^0) \cap \Scl(V^1))
    \subseteq \S$. Hence, there is a polynomial $f^0_\sigma \in I(V^0)$ with
    leading term $\sigma$. Consequently, the polynomial $f_\tau :=
    (x_n-1)f^0_\sigma$ has leading term $\x^\tau$ and vanishes on $V$.

    Let $I' \subseteq I(V)$ be the ideal generated by the polynomials $f_\tau$
    for $\tau \not\in \S$. Then 
    \[
        |\S| \ \ge \ \dim_\R \R[\x]/I' \ \ge \ \dim_\R \R[\x]/I(V) \ = \ |V|
        \, .
    \]
    On the other hand, we have $|\S| = |\Scl(V^0)| + |\Scl(V^1)| = |V^0| +
    |V^1| = |V|$. This shows that $I' = I(V)$ and proves the claim.
\end{proof}

If $V = \emptyset$, then $\Scl(V) = \emptyset$. On the other hand, if $|V| =
1$, then $\Scl(V) = \{ \emptyset \}$. This gives starting conditions for a
recursive computation of $\Scl(V)$ for general $V \subseteq \ZO^n$.  In this
case, we can rephrase Lemma~\ref{lem:lex} as follows, which also
yields~\eqref{eqn:mapping}.

\begin{corollary}
    Let $V \subseteq \ZO^n$. Then $\Scl(V)$ is the mapping cone for the
    inclusion
    \[
        \Scl(V^0) \cap \Scl(V^1)  \ \hookrightarrow \ \Scl(V^0) \ \cup \
        \Scl(V^1) \, .
    \]
\end{corollary}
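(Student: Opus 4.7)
The plan is to derive the corollary directly from Lemma~\ref{lem:lex} by unpacking the combinatorial model of a mapping cone. First I would recall that for an inclusion $i\colon A \hookrightarrow B$ of simplicial complexes, the mapping cone is the pushout $B \sqcup_{i(A)} \mathrm{cone}(A)$. When $A$ is realized as a subcomplex of $B$ and $v$ is any vertex not occurring in $B$, this pushout is realized simplicially as $B \cup (v \ast A)$: the cone $v \ast A$ provides a combinatorial model of $\mathrm{cone}(A)$, and gluing takes place along the common subcomplex $A$ sitting in both $B$ and $v \ast A$.

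Second, I would verify that the vertex $n$ is a legitimate choice of apex. Since $V^0, V^1 \subseteq \ZO^{n-1}$, the standard complexes $\Scl(V^0)$ and $\Scl(V^1)$, as well as their union and intersection, are simplicial complexes on the vertex set $[n-1]$. In particular $n$ is not a vertex of any of them, so the join $n \ast \bigl(\Scl(V^0)\cap\Scl(V^1)\bigr)$ is a genuine simplicial cone, and the pushout description of the preceding paragraph applies with
\[
    B \ = \ \Scl(V^0) \cup \Scl(V^1), \qquad A \ = \ \Scl(V^0) \cap \Scl(V^1), \qquad v \ = \ n.
\]
The resulting mapping cone is therefore the simplicial complex
\[
    \bigl(\Scl(V^0) \cup \Scl(V^1)\bigr) \ \cup \ \bigl(n \ast (\Scl(V^0) \cap \Scl(V^1))\bigr),
\]
which by Lemma~\ref{lem:lex} is exactly $\Scl(V)$.

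I do not anticipate any real obstacle: all nontrivial content is already contained in Lemma~\ref{lem:lex}, and the corollary is a purely formal reinterpretation of that decomposition. The only point that deserves explicit mention is the availability of $n$ as a fresh apex, which is what allows the set-theoretic union in Lemma~\ref{lem:lex} to be read as the simplicial pushout computing the mapping cone.
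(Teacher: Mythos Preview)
Your proposal is correct and matches the paper's approach exactly: the paper does not even give a separate proof of this corollary, treating it as an immediate rephrasing of Lemma~\ref{lem:lex} once the definition of a mapping cone is recalled. Your only additions---checking that $n$ is a fresh vertex for the cone and spelling out the simplicial model of the pushout---are the natural details one would supply if asked to expand the paper's one-line justification.
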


Let $M$ be a matroid with groundset $[n]$. If $n$ is neither a loop nor a
coloop, then $M \setminus n$ is the matroid with bases $B \in \Bases{M}$ with
$n \not\in B$. The contraction is the matroid $M / n$ with bases $B \setminus
n$ for $B \in \Bases{M}$ and $n \in B$. It follows that if $V = V_M$ is the
basis configuration of a matroid $M$, then $V^0 = V_{M \backslash n} \times
\{0\}$ and $V^1 = V_{M / n} \times \{1\}$. This shows Theorem~\ref{thm:lex}.
Note that if $n$ is a loop, then $V^1 = \emptyset$ whereas if $n$ is a
coloop, then $V^0 = \emptyset$.

Lemma~\ref{lem:lex} also gives us a way to prove
Corollary~\ref{cor:bijection}.

\begin{proof}[Proof of Corollary~\ref{cor:bijection}]
    For $M = \{\emptyset\} \in \Mclass$, we have $\Bases{M} = \{\emptyset\} =
    \Scl(M)$ and set $\bij_M( \emptyset ) := \emptyset$.

    For a nonempty matroid $M \in \Mclass$ with $m = m(M)$, we assume by
    induction that $\bij_{M\backslash m}$ and $\bij_{M/m}$ are bijections with the desired properties.
    Let $B \in \Bases{M}$.  If $m\not\in B$, then $B \in \Bases{M\backslash
    m}$ and we set  $\bij_M(B) := \bij_{M \backslash m}(B)$.  If $m \in B$,
    then $B \in \Bases{M/m}$ and let $\tau := \bij_{M / m}(B \setminus m)$.
    If $\tau \not\in \Scl(M \setminus m)$, then we set $\bij_M(B) := \tau$.
    Otherwise, we set $\bij_M(B) := \tau \cup \{m\}$.  It follows from
    \Cref{lem:lex} that this is well-defined and a bijection from $\Bases{M}$
    to $\Scl(M)$.  It also follows from \Cref{lem:lex} that $\bij_M$ is the
    unique bijection for $M \in \Mclass$ also satisfying $\bij_M(B) \subseteq
    B$.
\end{proof}

\section{Standard complexes for lattice path matroids}
\label{sec:lattices}

In this section, we discuss the proposed bijection in \Cref{cor:bijection} for the max-minor class class of lattice path matroids.
As we see below, this bijection closely relates to classical bijective combinatorics on lattice paths.

A \Def{lattice path} $C$ from $(0,0)$ to $(d,n-d)$ in $\Z^2$ is a sequence
of~$d$ \Def{east steps}~$\e$ in direction $(1,0)$ and $n-d$ \Def{north
steps}~$\n$ in direction $(0,1)$.  We denote the collection of all such path
by $\latpaths{n,d}$.  A path $C \in \latpaths{n,d}$ may be represented as a
word in $\{\e,\n\}$, and we refer to its $i$-th letter as~$C_i$.  We refer to
the actual line segment of a step in $\mathbb{R}^2$ between its endpoints as
its \Dfn{realization}.  For later reference, we also define a \Dfn{diagonal
step}~$\d$ in direction $(1,1)$ and an \Dfn{empty step}~$\emp$.  

Say that~$U$ is \Def{weakly above}~$L$ for $U,L \in \latpaths{n,d}$ if~$U$
never goes below~$L$.  In terms of words this means that every prefix~$L$
contains at least as many east steps as the corresponding prefix of~$U$.  For
two such paths, let $\paths{U,L} \subseteq \latpaths{n,d}$ be the set of all
lattice paths weakly between~$U$ and~$L$, i.e, weakly below~$U$ and weakly
above~$L$. We refer to~$U$ as the \Dfn{upper boundary} of $\paths{U,L}$ and
to~$L$ as the \Dfn{lower boundary}.

If $n$ and $d$ are fixed, then we may identify $L \in \latpaths{n,d}$ by its
ordered collection of east steps $\east{L} = \{ l_1 < \cdots < l_d\}$.  If $U
\in \latpaths{n,d}$ with  $\east{U} = \{ u_1 < \cdots < u_d\}$, then $U$ is
weakly above~$L$ if and only if
\begin{equation} \label{eq:comp_order}
    l_i \ \le u_i \quad \text{ for } i=1,\dots,d \, .
\end{equation}
We denote this by $\east{L} \leqcomp \east{U}$.

It was observed by Bonin, de Mier, and Noy~\cite{BMN} that the collection of
lattice path between $U$ and $L$ give rise to a matroid. The special case of
Dyck paths was studied by Ardilla~\cite{Ard2003}. Let $U,L \in
\latpaths{n,d}$ with~$U$ weakly above~$L$. Then 
\[
    \bases{U,L} = \bigset{ \east{C} }{ C \in \paths{U,L} } = \bigset{ Z
    \subseteq [n] }{ |Z| = d, \east{L} \leqcomp Z \leqcomp \east{U} } 
\]
is the collection of bases of a matroid $\matroid{U,L}$ on groundset $[n]$,
called a \Def{lattice path matroid}. The collection $\Mclass$ of lattice path
matroids is closed under deletion, contraction, and duality.

We denote the lexicographic standard complex of $\matroid{U,L}$ by 
$\Scl[U,L] := \Scl(\matroid{U,L})$.
Let
\[
  \bij = \bij_{\matroid{U,L}} : \bases{U,L} \to \Scl[U,L]
\]
be the bijection of Corollary~\ref{cor:bijection}. For a lattice path $C \in \paths{U,L}$ we thus have
\[
    \bij( \east{C} ) \ \subseteq \ \east{C} \, .
\]
The goal of this section is to make this selection of east steps explicit by
means of lattice path combinatorics.
Concretely, this means to find a combinatorial statistic~$\stat$ associating to each path $C \in \paths{U,L}$ a subset $\stat(C$) of its east steps $\east{C}$ such that $\bij(\east{C}) = \stat(C)$.

\subsection{Simple version of main theorem}

As a warm-up, we provide a description for the special case of the trivial lower boundary
\[
  \Ltriv = \underbrace{\e\quad\cdots\quad\e}_{d \text{ times}}\ \underbrace{\n\quad\cdots\quad\n}_{n-d \text{ times}} \in \latpaths{n,d}.
\]
For a path~$C \in \latpaths{n,d}$, the statistic $\stat(C)$ is obtained by marking certain east steps as follows: scan through the word of~$C$ from left to right and mark the step $C_i = \e$ if there are as many $\n$'s to the left of position~$i$ as there are unmarked~$\e$'s.
See \Cref{ex:runningextriv} for an example of this marking process.
As indicated in \Cref{fig:runningextriv}, one may interpret the marked and unmarked east steps of a path~$C$ as well graphically by drawing the \Dfn{$\Ltriv$-marking path} of~$C$.
This path, $\mar{\Ltriv}{C}$, of total length~$d$ consists of east steps~$\e$ and diagonal steps~$\d$.
It starts at $(0,0)$ and uses diagonal steps whenever possible without going above the path~$C$, and otherwise uses east steps.
The marked east steps of~$C$ are then those whose realizations are also east steps of the marking path $\mar{\Ltriv}{C}$.

\begin{figure}
  \centering
  \scalebox{0.8}{
    \begin{tikzpicture}[scale=0.75]
      \draw[dotted] (0, 0) grid (10,8);
      \drawPath{0,0,0,0,0,0,0,0,0,0,1,1,1,1,1,1,1,1}{3}{opacity=0.2}{(0,0)}
      \drawPath{0,1,0,1,0,0,1,0,1,0,0,1,0,1,1,0,0,1}{3}{black}{(0,0)}
      \drawPath{0,2,2,0,2,2,0,2,2,2}{2}{red}{(0,0)}
      \node at ( 1.5, 1.4) {$3$};
      \node at ( 2.5, 2.4) {$5$};
      \node at ( 4.5, 3.4) {$8$};
      \node at ( 5.5, 4.4) {$10$};
      \node at ( 7.5, 5.4) {$13$};
      \node at ( 8.5, 7.4) {$16$};
      \node at ( 9.5, 7.4) {$17$};
      \draw[fill=black] (10,8) circle (.15);
      \node[fill=white,inner sep=2pt] at (10,8.6) {$(10,8)$};
    \end{tikzpicture}
  }
  \caption{\label{fig:runningextriv}The path in \Cref{ex:runningextriv} with its $\Ltriv$-marking path.}
\end{figure}

\begin{definition}[Combinatorial statistic, simple version]
  Mark east steps of $C \in \latpaths{n,d}$ using the $\Ltriv$-marking path
  $P = \mar{\Ltriv}{C}$. Then
  \[
    \stat_{\Ltriv}(C) \ := \ \east{C} \setminus \east{P} \ \subseteq \ \east{C}
  \]
  is the \emph{unmarked east steps} in~$C$.
\end{definition}

\begin{example}
\label{ex:runningextriv}
  We mark the path $C \in \latpaths{18,10}$ in \Cref{fig:runningextriv} as
  \[
    \begin{array}{rccccccccccccccccccc}
      C & = & \underline\e & \n & \e & \n & \e & \underline\e & \n & \e & \n & \e & \underline\e & \n & \e & \n & \n & \e & \e & \n \\[5pt]
      \mar{\Ltriv}{C} & = & \e & & \d & &\d & \e & & \d & & \d & \e & & \d & & & \d & \d &
    \end{array}
  \]
  and obtain
  \[
  \stat_\Ltriv(C) = \{ 3,5,8,10,13,16,17 \} \subseteq \{ 1,{\bf 3},{\bf 5},6,{\bf 8},{\bf 10},11,{\bf 13},{\bf 16},{\bf 17} \} = \east{C}.
  \]
\end{example}

Note that $\stat_\Ltriv(C)$ is constructed in a greedy-like manner and, more
over, is independent of the upper boundary $U$.

\begin{remark}
  \label{rem:hooks}
  As observed using the statistics database FindStat~\cite{FindStat}, one may
  as well define $\stat_\Ltriv(C)$ using the following hook placements in the
  area between~$C$ and the lower boundary $\Ltriv$.
  Scan the columns below a given path~$C \in \latpaths{n,d}$ from left to right and place, if possible, the corner box of a south-east hook into the north-most box below~$C$ in that column that is not already covered by other hooks.
  The columns of the placed corner boxes are then exactly the columns that contribute to the statistic.
  We refer to \Cref{fig:hook} for an example of this procedure.
  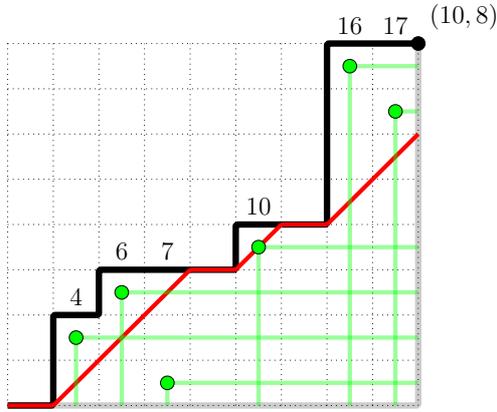
\begin{figure}
    \centering
    \scalebox{0.8}{
      \begin{tikzpicture}[scale=0.75]
        \draw[dotted] (0, 0) grid (9,8);
        \drawPath{0,0,0,0,0,0,0,0,0,1,1,1,1,1,1,1,1}{3}{opacity=0.2}{(0,0)}
        \drawPath{0,1,1,0,1,0,0,0,1,0,0,1,1,1,1,0,0}{3}{black}{(0,0)}
        \drawPath{0,2,2,2,0,2,0,2,2}{2}{red}{(0,0)}
        \node at ( 1.5, 2.4) {$4$};
        \node at ( 2.5, 3.4) {$6$};
        \node at ( 3.5, 3.4) {$7$};
        \node at ( 5.5, 4.4) {$10$};
        \node at ( 7.5, 8.4) {$16$};
        \node at ( 8.5, 8.4) {$17$};
        \draw[fill=black] (9,8) circle (.15);
        \node[fill=white,inner sep=2pt] at (10,8.6) {$(10,8)$};

        \draw[rounded corners=1, line width=2, opacity=0.4, green] (1.5,0) -- (1.5,1.5) -- (9,1.5);
        \draw[rounded corners=1, line width=2, opacity=0.4, green] (2.5,0) -- (2.5,2.5) -- (9,2.5);
        \draw[rounded corners=1, line width=2, opacity=0.4, green] (3.5,0) -- (3.5,0.5) -- (9,0.5);
        \draw[rounded corners=1, line width=2, opacity=0.4, green] (5.5,0) -- (5.5,3.5) -- (9,3.5);
        \draw[rounded corners=1, line width=2, opacity=0.4, green] (7.5,0) -- (7.5,7.5) -- (9,7.5);
        \draw[rounded corners=1, line width=2, opacity=0.4, green] (8.5,0) -- (8.5,6.5) -- (9,6.5);
        \draw[fill=green] (1.5,1.5) circle (.15);
        \draw[fill=green] (2.5,2.5) circle (.15);
        \draw[fill=green] (3.5,0.5) circle (.15);
        \draw[fill=green] (5.5,3.5) circle (.15);
        \draw[fill=green] (7.5,7.5) circle (.15);
        \draw[fill=green] (8.5,6.5) circle (.15);
      \end{tikzpicture}
    }
    \caption{\label{fig:hook}An example of the hook placement of \Cref{rem:hooks}.}
  \end{figure}
\end{remark}

\begin{theorem}[Simple version]
\label{thm:mainsimple}
  Let $U \in \latpaths{n,d}$ and let $\bij = \bij_{M[U,\Ltriv]}$
  Then
  \[
    \stat(C) = \bij(\east{C})\quad \text{ for all } C \in \paths{U,\Ltriv}.
  \]
  In particular, the standard complex in this case can be described as
  \[
    \Scl(\matroid{U,\Ltriv}) = \bigset{ \stat(C) }{ C \in \paths{U,\Ltriv}}.
  \]
\end{theorem}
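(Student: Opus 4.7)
The plan is to proceed by induction on $n$ and invoke the uniqueness part of \Cref{cor:bijection}: since lattice path matroids with trivial lower boundary form a max-minor-closed class, it suffices to verify that $C \mapsto \stat(C) := \stat_\Ltriv(C)$ satisfies $\stat(C) \subseteq \east{C}$ together with the deletion/contraction recursion defining $\bij$. The inclusion is built into the marking algorithm. The crucial observation underlying both recursions is that the marking reads $C$ strictly from left to right, so the marks on east steps in positions $\le n-1$ depend only on the prefix of $C$ of length $n-1$.

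First I would set up the deletion/contraction structure. If $C_n = \n$, then $\matroid{U,\Ltriv}\setminus n = \matroid{U^*,\Ltriv^*}$ for some $U^* \in \latpaths{n-1,d}$ and trivial $\Ltriv^*$, and the left-to-right property together with induction yields $\stat(C) = \stat(C_{[1,n-1]}) = \bij_{M\setminus n}(\east{C}) = \bij_M(\east{C})$. If $C_n = \e$, then $\matroid{U,\Ltriv}/n = \matroid{U',\Ltriv'}$ with $\Ltriv'$ trivial in $\latpaths{n-1,d-1}$; writing $C'' := C_{[1,n-1]}$ and $\tau := \stat(C'')$, the same reasoning gives $\stat(C)\cap [n-1] = \tau = \bij_{M/n}(\east{C}\setminus n)$, which already verifies the contraction identity modulo the single bit of information: whether $n \in \stat(C)$.

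From the proof of \Cref{cor:bijection}, $n \in \bij_M(\east{C})$ if and only if $\tau \in \Scl(M\setminus n)$. From the marking rule applied at the $d$-th east step (at position $n$), $n \in \stat(C)$ if and only if $|\tau| < n-d$, i.e., the marking path of $C''$ ends strictly below $(d-1, n-d)$. The theorem therefore reduces to the equivalence
\[
  \tau \in \Scl(\matroid{U,\Ltriv}\setminus n) \ \Longleftrightarrow \ |\tau| \le n-1-d.
\]
The forward direction is immediate from the inductive hypothesis applied to the smaller matroid $\matroid{U,\Ltriv}\setminus n$: any element of $\Scl(\matroid{U,\Ltriv}\setminus n)$ has the form $\stat(C^*)$ for some $C^* \in \paths{U^*,\Ltriv^*} \subseteq \latpaths{n-1,d}$, and its size equals the endpoint height of its marking path, which is at most $n-1-d$ since the marking path stays weakly below $C^*$.

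The main obstacle is the reverse direction: starting from $C''$ with $\tau = \stat(C'')$ and $|\tau| \le n-1-d$, one must exhibit $C^* \in \paths{U^*,\Ltriv^*}$ with $\stat(C^*) = \tau$. My approach is to construct $C^*$ from $C''$ by promoting exactly one $\n$-step to an $\e$-step, preserving the length $n-1$ while raising the east-count from $d-1$ to $d$; the slack $n-1-d-|\tau|\ge 0$ is precisely what enables such a promotion. The position of the conversion must be chosen so that the promoted east step becomes marked in $C^*$ while every previously existing mark is preserved, and one must verify that the resulting $C^*$ lies weakly below $U^*$. The latter check relies on an explicit description of $U^*$ in terms of $U$ (obtained by swapping the final $\n\e$ in $U$, with a cascading modification when $U$ ends in a run of east steps). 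Once $C^*$ is produced, both recursions hold in all cases, \Cref{cor:bijection} forces $\stat_\Ltriv = \bij$ on this class, and the set characterization $\Scl(\matroid{U,\Ltriv}) = \{\stat(C) : C \in \paths{U,\Ltriv}\}$ follows.
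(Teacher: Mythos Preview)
Your approach is correct and is essentially the same as the paper's. The paper organizes the argument by first proving a set-level recursion (\Cref{thm:Lrecurrencesimple}) via the intermediate results \Cref{lem:neccondsimple}, \Cref{lem:suffcondsimple}, \Cref{prop:constructpathfromstatsimple}, and \Cref{prop:intersectionsimple}, and then invokes \Cref{thm:lex}; you instead verify the bijection-level recursion from \Cref{cor:bijection} directly, but the combinatorial core is identical. Your ``promotion of one $\n$-step to an $\e$-step'' is precisely the bijection of \Cref{lem:suffcondsimple} (the paper takes the \emph{last} north step whose realization starts on the marking path), your numerical criterion $|\tau|\le n-1-d$ is the content of \Cref{lem:neccondsimple}/\Cref{prop:constructpathfromstatsimple}, and your key equivalence $\tau\in\Scl(M\setminus n)\Leftrightarrow |\tau|\le n-1-d$ (for $\tau=\stat(C'')$) is exactly what \Cref{prop:intersectionsimple} is used for. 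The one detail you flag---that the promoted path $C^*$ lies weakly below $U^*=U^\n_\circ$---is also needed in the paper's argument and is handled only implicitly there; it follows because whenever $c_i=u_i$ for some $i$ in the ``terminal run'' of east steps of $U$, all later positions of $C''$ are forced to be east, so the promoted position $p$ lies strictly before $c_i$ and hence $c'_i<c_i=u_i$.
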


\subsection{General version of main theorem}

After having given the simple version of the statistic for the trivial lower
boundary~$\Ltriv$, we modify the definition of the marking path depending on
the
given lower boundary by introducing another intermediate path.
We remark that it seems to be not possible to give a description in terms of hook placements as in \Cref{rem:hooks} for this general situation of non-trivial lower bounds.

Let $C,L \in \latpaths{n,d}$ with~$C$ weakly above~$L$.
The \Dfn{$L$-demarcation path} of~$C$, $\dem{L}{C}$, is defined to start at $(0,0)$ with steps
\[
  \dem{L}{C}_i =
  \begin{cases}
    \n & \text{ if }   C_i = \n,\ L_i = \n \\
    \e & \text{ if }   C_i = \e,\ L_i = \e \\
    \d & \text{ if }   C_i = \n,\ L_i = \e \\
    \emp & \text{ if } C_i = \e,\ L_i = \n
  \end{cases}\ .
\]
Graphically, this means that $\dem{L}{C}$ starts at $(0,0)$ and then is obtained by combining the $\e$-coordinate of~$L$ with the $\n$-coordinate of~$C$.
In particular, this implies that the $L$-demarcation path of~$C$ is weakly between~$L$ and~$C$.

\begin{example}
\label{ex:runningexhard}
  The $L$-demarcation path $\dem{L}{C}$ of the path $C \in \latpaths{21,11}$ in \Cref{fig:runningexhard} relative to the given lower boundary $L \in \latpaths{21,11}$ is:
  \[
    \begin{array}{rcccccccccccccccccccccc}
      C & = & \n & \n & \n & \e & \e & \e & \n & \e & \e & \n & \e & \n & \n & \e & \n & \n & \n & \e & \e & \e & \e
      \\
      L & = & \e & \e & \e & \n & \e & \e & \e & \n & \n & \e & \n & \e & \e & \n & \n & \n & \n & \e & \e & \n & \n
      \\[10pt]
      \dem{L}{C} & = & \d & \d & \d & \emp & \e & \e & \d & \emp & \emp & \d & \emp & \d & \d & \emp & \n & \n & \n & \e & \e & \emp & \emp
    \end{array}
  \]
\end{example}
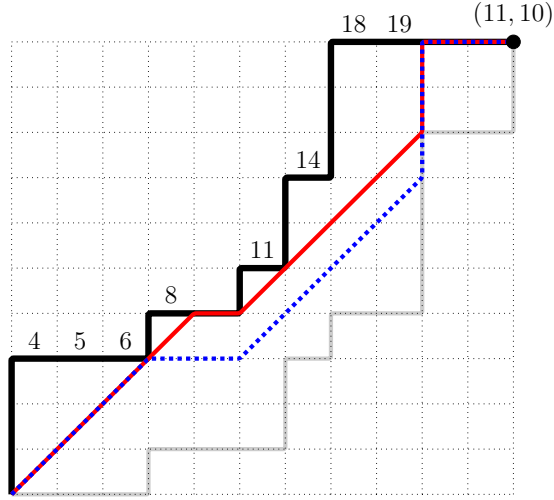
\begin{figure}
  \centering
  \scalebox{0.8}{
    \begin{tikzpicture}[scale=0.75]
      \draw[dotted] (0, 0) grid (11,10);
      \drawPath{0,0,0,1,0,0,0,1,1,0,1,0,0,1,1,1,1,0,0,1,1}{2}{opacity=0.2}{(0,0)}
      \drawPath{1,1,1,0,0,0,1,0,0,1,0,1,1,0,1,1,1,0,0,0,0}{3}{black}{(0,0)}
      \drawPath{2,2,2,2,0,2,2,2,2,1,1,0,0}{2}{red}{(0,0)}
      \drawPath{2,2,2,0,0,2,2,2,2,1,1,1,0,0}{2}{blue, dotted}{(0,0)}
      \node at ( 0.5, 3.4) {$4$};
      \node at ( 1.5, 3.4) {$5$};
      \node at ( 2.5, 3.4) {$6$};
      \node at ( 3.5, 4.4) {$8$};
      \node at ( 5.5, 5.4) {$11$};
      \node at ( 6.5, 7.4) {$14$};
      \node at ( 7.5,10.4) {$18$};
      \node at ( 8.5,10.4) {$19$};
      \draw[fill=black] (11,10) circle (.15);
      \node[fill=white,inner sep=2pt] at (11,10.6) {$(11,10)$};
    \end{tikzpicture}
  }
  \caption{The path in \Cref{ex:runningexhard} with the given lower boundary $L$, its $L$-demarcation path in dotted blue and its $L$-marking path in red.}
  \label{fig:runningexhard}
\end{figure}

Let $C,L \in \latpaths{n,d}$ with~$C$ weakly above~$L$.
The \Dfn{$L$-marking path} of~$C$, $\mar{L}{C}$, consists of diagonal steps $\d$, east steps~$\e$ and north steps $\n$.
It goes from $(0,0)$ to $(d,n-d)$ and uses diagonal steps whenever possible without going above the path~$C$ or below the $L$-demarcation path $\dem{L}{C}$.
If it would go above~$C$, it uses an east steps instead, and if it would go below $\dem{L}{C}$, it uses a north step instead.
The marked east steps of~$C$ are then those whose realizations are also east steps of the marking path $\mar{L}{C}$.

\begin{definition}[Combinatorial statistic, general version]
    Let $C,L \in \latpaths{n,d}$ with~$C$ weakly above~$L$ and mark east steps
    of~$C$ using the $L$-marking path $P = \mar{L}{C}$.  Then
  \[
    \stat_{L}(C) \ := \ \east{C} \setminus \east{P} \ \subseteq \ \east{C}
  \]
  is the \emph{unmarked east steps} in~$C$.
\end{definition}

\begin{example}
\label{ex:runningexhard2}
  We mark east steps in the path $C \in \latpaths{21,11}$ in \Cref{fig:runningexhard} relative to the given lower boundary~$L \in \latpaths{21,11}$ as
  \[
    C = \n\ \n\ \n\ \e\ \e\ \e\ \n\ \e\ \underline{\e}\ \n\ \e\ \n\ \n\ \e\ \n\ \n\ \n\ \e\ \e\ \underline{\e}\ \underline{\e}
  \]
  and obtain
  \[
  \stat_L(C) = \{ 4,5,6,8,11,14,18,19 \} \subseteq \{ {\bf 4},{\bf 5},{\bf 6},{\bf 8},9,{\bf 11},{\bf 14},{\bf 18},{\bf 19},20,21 \} = \east{C}.
  \]
\end{example}

\begin{theorem}[General version]
\label{thm:mainhard}
  Let $U,L \in \latpaths{n,d}$ with~$U$ weakly above~$L$ and let $\bij = \bij_{M[U,L]}$.
  Then
  \[
    \stat(C) = \bij(\east{C})\quad \text{ for all } C \in \paths{U,L}.
  \]
  In particular, the standard complex in this case can be described as
  \[
    \Scl(\matroid{U,L}) = \bigset{ \stat(C) }{ C \in \paths{U,L}}.
  \]
\end{theorem}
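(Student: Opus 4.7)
The plan is to induct on $n = |E|$, using the uniqueness statement of \Cref{cor:bijection}: since lattice path matroids form a max-minor-closed subclass of $\Mclass$, it suffices to show that the family $C \mapsto \stat_L(C)$, as $\matroid{U,L}$ ranges over this subclass, provides bijections $\paths{U,L} \to \Scl[U,L]$ satisfying $\stat_L(C) \subseteq \east{C}$ together with the deletion and contraction recursions for $m(M) = n$. This forces $\stat_L = \bij_{\matroid{U,L}}$. The containment $\stat_L(C) \subseteq \east{C}$ is built into the definition, and the base case $n = 0$ is immediate.

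For the inductive step, fix $M = \matroid{U,L}$ with $m(M) = n$. If $n$ is a loop ($U_n = \n$) or a coloop ($L_n = \e$), the last letter of every $C \in \paths{U,L}$ agrees with the last letter of $L$; then both the $L$-demarcation path and the $L$-marking path of $C$ are obtained from those of $(C_1 \cdots C_{n-1}, L_1 \cdots L_{n-1})$ by one terminal step that contributes no east edge to the marking path. This gives $\stat_L(C) = \stat_{L_1 \cdots L_{n-1}}(C_1 \cdots C_{n-1})$, matching the loop/coloop recursion from \Cref{thm:lex} and \Cref{cor:bijection}. If $n$ is neither a loop nor a coloop, then $U_n = \e$ and $L_n = \n$. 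When $C_n = \n$, the deletion $M \backslash n$ has lower boundary $L' = L_1 \cdots L_{n-1}$, and the same argument gives $\stat_L(C) = \stat_{L'}(C_1 \cdots C_{n-1})$, matching the deletion recursion.

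The heart of the proof is the contraction case: $n$ neither a loop nor a coloop and $C_n = \e$. A direct check shows that $M/n$ is the lattice path matroid with upper boundary $U'' = U_1 \cdots U_{n-1}$ and lower boundary $L''$ obtained from $L$ by deleting its last east step, which occurs at position $p = \max \east{L} < n$. The main obstacle is that removing an east step from $L$ at an internal position $p$ alters the demarcation paths $\dem{L}{C}$ and $\dem{L''}{C''}$ (where $C'' = C_1 \cdots C_{n-1}$) non-locally: $\dem{L}{C}$ acquires one extra east displacement at position $p$ and one extra empty step at position $n$. I would establish a key combinatorial lemma showing that $\mar{L}{C}$ arises from $\mar{L''}{C''}$ by inserting a single east step at the appropriate height (paralleling the shift in the demarcation) and then appending a final step for $C_n$; the inserted east step corresponds to no east step of $C$, so the markings at positions $1, \dots, n-1$ are unchanged. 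This yields $\stat_L(C) \setminus \{n\} = \stat_{L''}(C'')$.

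The most delicate point is then to align whether $n$ itself belongs to $\stat_L(C)$, i.e., whether the appended terminal step of $\mar{L}{C}$ is an $\e$ or a $\d$, with the criterion from the proof of \Cref{cor:bijection} that determines adjoining $m$ (namely, $\stat_{L''}(C'')$ lying in $\Scl$ of the deletion $M \backslash n$). I would analyse the greedy construction of $\mar{L}{C}$ at position $n$ to show that an $\e$ is forced precisely when $\stat_{L''}(C'')$ fails to be a face of the deletion's standard complex; this verification, together with the preceding steps, then completes the induction and yields $\stat_L = \bij_{\matroid{U,L}}$.
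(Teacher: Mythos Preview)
Your induction via the uniqueness clause of Corollary~\ref{cor:bijection} is exactly the paper's route: the paper reduces Theorem~\ref{thm:mainhard} to Theorem~\ref{thm:Lrecurrencehard}, the statement that the image of $\stat_L$ obeys the same mapping-cone recursion as $\Scl$ from Theorem~\ref{thm:lex}. Your loop/coloop and deletion arguments are correct and match the paper.

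The gap is in the contraction case. First, your ``key combinatorial lemma'' describing $\mar{L}{C}$ as $\mar{L''}{C''}$ with one east step inserted and one final step appended is not right as stated: those two additional steps would overshoot the endpoint $(d,n-d)$, and in any case shifting the demarcation path one unit to the right from position~$p$ onward can force the greedy marking path to trade a diagonal step for a north step, so the relationship is not a simple insertion. The conclusion $\stat_L(C)\setminus\{n\}=\stat_{L''}(C_\circ)$ is true, but your sketch does not establish it.

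More seriously, the final alignment is where the real content lies, and you have not supplied it. Unwinding the induction, the condition ``$\stat_{L''}(C_\circ)\in\Scl(M\backslash n)$'' becomes ``there exists $D\in\paths{U,L}^{\n}_\circ$ with $\stat_{L^\n_\circ}(D)=\stat_{L^\e_\circ}(C_\circ)$'': an \emph{existence} statement about a different path sharing the same statistic value. This cannot be read off from the greedy rule for $\mar{L}{C}$ at position~$n$ alone. The paper's mechanism is Lemma~\ref{lem:suffcondhard}: an explicit north/east step-swap producing, for each $C$ whose marking path misses its endpoint $(d,n-d)$, a path $C'$ with one more east step and $\stat_L(C')=\stat_L(C)$. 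Through Proposition~\ref{prop:constructpathfromstathard} and Corollary~\ref{prop:intersectionhard} this identifies $\{\stat_L(C):C\in\paths{U,L}^{\e,-}_\circ\}$ with $\mathcal{S}^\e_{\mathcal{L}}\cap\mathcal{S}^\n_{\mathcal{L}}$, which is precisely the equivalence you need. Without this swap (or an equivalent construction of the witness~$D$), your induction does not close.
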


Surprisingly, this combinatorial description yields the following property of the bijection~$\bij$ in the case of lattice path matroids.

\newcommand\restr[2]{{%
    \left.\kern-\nulldelimiterspace %
    #1 %
    \vphantom{\big|} %
    \right|_{#2} %
  }}

\begin{corollary}
\label{cor:mainhard}
  Let $U,U',L \in \latpaths{n,d}$ such that~$U$ is weakly above~$U'$ and~$U'$ is weakly above~$L$.
  Then
  \[
    \restr{\bij_{\matroid{U,L}}}{\bases{U',L}} = \bij_{\matroid{U',L}}.
  \]
  In particular, $\Scl[U',L] \subseteq \Scl[U,L]$.
\end{corollary}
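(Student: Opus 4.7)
The plan is to leverage the fact that Theorem~\ref{thm:mainhard} identifies $\bij_{\matroid{U,L}}(\east{C})$ with the combinatorial statistic $\stat_L(C)$, whose definition makes no reference to the upper boundary~$U$: only the lower boundary~$L$ and the path~$C$ itself enter the construction of the demarcation path $\dem{L}{C}$ and, subsequently, of the marking path $\mar{L}{C}$. Thus $\stat_L(C)$ is an invariant of the pair $(C,L)$ alone, and this single observation essentially drives the entire proof.

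With this in hand, I would proceed as follows. Since~$U$ is weakly above~$U'$, we have the chain of inclusions $\paths{U',L} \subseteq \paths{U,L}$ and thus $\bases{U',L} \subseteq \bases{U,L}$. For a fixed $C \in \paths{U',L}$, applying Theorem~\ref{thm:mainhard} to each of the two matroids gives
\[
  \bij_{\matroid{U,L}}(\east{C}) \ = \ \stat_L(C) \ = \ \bij_{\matroid{U',L}}(\east{C}) ,
\]
which is precisely the equality of restrictions claimed in the corollary.

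For the final assertion, the restriction of $\bij_{\matroid{U,L}}$ to $\bases{U',L}$ coincides with $\bij_{\matroid{U',L}}$ by the above, and the latter is a bijection onto $\Scl[U',L]$. Since this restriction takes values in $\Scl[U,L]$ by construction, we obtain $\Scl[U',L] \subseteq \Scl[U,L]$, and the inclusion is automatically a subcomplex inclusion as both sides are simplicial complexes on a common ground set.

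The main (non-)obstacle here is that there is no real difficulty: once Theorem~\ref{thm:mainhard} is available, the corollary reduces to an essentially formal consequence of the $U$-independence of $\stat_L$. The only care needed is in verifying this independence from the definitions of $\dem{L}{C}$ and $\mar{L}{C}$, which is transparent from their step-by-step construction. Indeed, the genuine content lies entirely in Theorem~\ref{thm:mainhard}; the present corollary then comes essentially for free and highlights a structural feature of the bijection that is by no means obvious from the recursive definition in Corollary~\ref{cor:bijection}.
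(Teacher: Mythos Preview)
Your proposal is correct and follows essentially the same approach as the paper, which simply states that the corollary is a direct consequence of Theorem~\ref{thm:mainhard}. You have merely spelled out explicitly the key point the paper leaves implicit, namely that $\stat_L(C)$ depends only on $C$ and $L$ and not on the upper boundary~$U$.
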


\begin{proof}
  This is a direct consequence of \Cref{thm:mainhard}.
\end{proof}

In light of \Cref{prop:dual}, this theorem also yields the following open problem.
For a path $C \in \latpaths{n,d}$ we write $C^* \in \latpaths{n,n-d}$ to be the path given by $C^*_i = \e$ if $C_i = \n$ and vice versa.

\begin{openproblem}
  For $U,L \in \latpaths{n,d}$ with~$U$ weakly above~$L$, find a bijection
  \[
    \varphi : \paths{U,L}\ \tilde \longleftrightarrow\ \paths{L^*,U^*}
  \]
  with the property that $\stat_L(C) = \stat_{L^*}(\varphi(C))$ for all $C \in \paths{U,L}$.
\end{openproblem}

Given $J \in \Scl(M[U,L]) = \Scl(M[L^*,U^*])$, we will see below how to construct the unique $C \in \paths{U,L}$ and the unique $C' \in \paths{L^*,U^*}$ with $\stat_L(C) = \stat_{U^*}(C') = J$.
The open problem is thus to provide a bijection without referring to the set~$J$ in the first place.
By definition, the bijection in question must send~$L$ to~$U^*$ and~$U$ to~$L^*$.
In particular, it is not the obvious bijection $C \mapsto C^*$.

\subsection{Proof of \texorpdfstring{\Cref{thm:mainsimple,thm:Lrecurrencesimple}}{combinatorial theorems}}

Let $U,L \in \latpaths{n,d}$ with~$U$ weakly above~$L$ and consider the obvious decomposition
\[
  \paths{U,L} = \paths{U,L}^\e \sqcup \paths{U,L}^\n
\]
where $\paths{U,L}^\e$ and $\paths{U,L}^\n$ denote all paths in $\paths{U,L}$ ending in an east step or, respectively, in a north step.
Observe that
\[
  \paths{U,L}^\e \neq \emptyset \Leftrightarrow U_n = \e, \quad 
  \paths{U,L}^\n \neq \emptyset \Leftrightarrow L_n = \n.
\]
We also write $\paths{U,L}^\e_\circ$ and $\paths{U,L}^\n_\circ$ for the
corresponding paths with the last east or, respectively, north step removed.
If they exist, these truncated paths may clearly again be realized as lattice
paths between boundaries:
\begin{align*}
  \paths{U,L}^\e_\circ &= \paths{U^\e_\circ,L^\e_\circ} \quad\text{ if } U_n = \e \\
  \paths{U,L}^\n_\circ &= \paths{U^\n_\circ,L^\n_\circ} \quad\text{ if } L_n = \n
\end{align*}
where~$U^\e_\circ,L^\e_\circ \in \latpaths{n-1,d-1}$ and $U^\n_\circ,L^\n_\circ \in \latpaths{n-1,d}$ are obtained from~$U$ and~$L$ by respectively removing the last east or north step.
(Note that in the first situation, the last east step of~$L$ might not be its last step and in the second, the last north step of~$U$ might not be its last step.)

\subsubsection*{Proof of \texorpdfstring{\Cref{thm:mainsimple}}{main theorem (simple version)}}

We first prove that the simple version of the combinatorial statistic by showing the following theorem.
\Cref{thm:mainsimple} then follows with \Cref{thm:lex}.

\begin{theorem}
\label{thm:Lrecurrencesimple}
  Let $U \in \latpaths{n,d}$ and set
  \begin{align*}
    \mathcal{S}_\mathcal{L} &= \bigset{\stat_\Ltriv(C)}{C \in \paths{U,\Ltriv}\hspace*{4.5pt}}, \\
    \mathcal{S}^\e_\mathcal{L} &= \bigset{\stat_\Ltriv(C)}{C \in \paths{U,\Ltriv}^\e_\circ}, \\
    \mathcal{S}^\n_\mathcal{L} &= \bigset{\stat_\Ltriv(C)}{C \in \paths{U,\Ltriv}^\n_\circ}.
    \intertext{We then have}
    \mathcal{S}_\mathcal{L} &= \mathcal{S}^\e_\mathcal{L}\ \cup\  \mathcal{S}^\n_\mathcal{L}\  \cup\  n * \big(\mathcal{S}^\e_\mathcal{L} \cap \mathcal{S}^n_\mathcal{L}\big). 
  \end{align*}
\end{theorem}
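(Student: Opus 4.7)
The plan is to argue the set identity directly, by analyzing $\stat_\Ltriv$ through the last-step decomposition $\paths{U,\Ltriv} = \paths{U,\Ltriv}^\e \sqcup \paths{U,\Ltriv}^\n$ (one side possibly empty in the loop/coloop corner cases) and matching the resulting description to the RHS. First I will handle how $\stat_\Ltriv$ behaves under the two truncations. For $C \in \paths{U,\Ltriv}^\n$ with truncation $C' \in \paths{U^\n_\circ,\Ltriv^\n_\circ}$, the marking paths $\mar{\Ltriv}{C}$ and $\mar{\Ltriv}{C'}$ coincide (the last $\n$ of $C$ plays no role in the $d$-step marking construction), so $\stat_\Ltriv(C) = \stat_\Ltriv(C')$. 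For $C \in \paths{U,\Ltriv}^\e$ with truncation $C' \in \paths{U^\e_\circ,\Ltriv^\e_\circ}$, the first $d-1$ steps of $\mar{\Ltriv}{C}$ coincide with $\mar{\Ltriv}{C'}$ and the $d$-th step is diagonal---making the last east step of $C$ unmarked---exactly when there is vertical room above the endpoint of $\mar{\Ltriv}{C'}$. Since that endpoint has $y$-coordinate $|\stat_\Ltriv(C')|$, this happens iff $|\stat_\Ltriv(C')| < n-d$.

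Combining these observations yields the decomposition
\[
  \mathcal{S}_\mathcal{L} = \mathcal{S}^\n_\mathcal{L} \;\cup\; \{T \in \mathcal{S}^\e_\mathcal{L} : |T| = n-d\} \;\cup\; \{T \cup \{n\} : T \in \mathcal{S}^\e_\mathcal{L},\ |T| < n-d\}.
\]
Every $T \in \mathcal{S}^\n_\mathcal{L}$ also satisfies $|T| \le n-d-1$ (the final $y$-coordinate of a marking path lying below a path ending at $(d, n-d-1)$). Granting these, the theorem reduces to the single inclusion
\[
  \{T \in \mathcal{S}^\e_\mathcal{L} : |T| < n-d\} \;\subseteq\; \mathcal{S}^\n_\mathcal{L},
\]
since combined with the size bound it identifies $\mathcal{S}^\e_\mathcal{L} \cap \mathcal{S}^\n_\mathcal{L}$ with $\{T \in \mathcal{S}^\e_\mathcal{L} : |T| < n-d\}$ and thereby rewrites the decomposition of $\mathcal{S}_\mathcal{L}$ as $\mathcal{S}^\e_\mathcal{L} \cup \mathcal{S}^\n_\mathcal{L} \cup \bigl(n * (\mathcal{S}^\e_\mathcal{L} \cap \mathcal{S}^\n_\mathcal{L})\bigr)$.

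The main obstacle is establishing this inclusion, which I plan to prove by an explicit flip. Let $c_i(C')$ denote the counter from the word definition of $\stat_\Ltriv$, equal to the number of $\n$'s in $C'_1 \cdots C'_{i-1}$ minus the number of unmarked $\e$'s there; the hypothesis $|\stat_\Ltriv(C')| < n-d$ says that the final value of $c$ is positive. Let $r$ be the largest index with $c_r(C') = 0$; a short case analysis gives $C'_r = \n$ and, using the hypothesis on $|\stat_\Ltriv(C')|$, that either $r = n-1$ or $C'_{r+1} = \n$ (otherwise every subsequent step would be forced to keep $c$ at $0$, making the final counter $0$, a contradiction). Define $C''$ by replacing $C'_r$ with $\e$. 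Induction on $i > r$ shows $c_i(C'') = c_i(C') - 1$; moreover, for $i > r$ with $C'_i = \e$ one necessarily has $c_i(C') \ge 2$ (since $c_{i+1}(C') \ge 1$ by maximality of $r$), which forces the marked/unmarked status at $i$ to agree in $C'$ and $C''$. Hence $\stat_\Ltriv(C'') = \stat_\Ltriv(C')$. The remaining technical point is to verify $\Ltriv^\n_\circ \le C'' \le U^\n_\circ$: the lower bound is immediate from $C''$ having one more $\e$ than $C'$, and the upper bound follows by showing that whenever $C'_r = \n$ at a position $r$ beyond the last $\n$-position of $U$, the constraint $C' \le U^\e_\circ$ already forces strict slack in the prefix $\e$-count of $C'$, leaving room to absorb the flip without violating $C'' \le U^\n_\circ$.
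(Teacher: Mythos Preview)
Your proof is correct and follows essentially the same route as the paper: decompose by the last step, characterize when $n \in \stat_\Ltriv(C)$ via the height of the marking path at its end, and establish the key inclusion by flipping a specific north step of $C'$ to an east step. Your flip at the last index $r$ with $c_r(C')=0$ is exactly the paper's operation (Lemma~\ref{lem:suffcondsimple}) of replacing the last north step whose realization starts on the marking path, since $c_i(C')=0$ precisely when the starting point of step $C'_i$ lies on $\mar{\Ltriv}{C'}$; the paper packages this as a two-sided bijection with a uniqueness statement (Proposition~\ref{prop:constructpathfromstatsimple}) feeding into Corollary~\ref{prop:intersectionsimple}, whereas your one-sided inclusion together with the size bound $|T|\le n-d-1$ on $\mathcal{S}^\n_\mathcal{L}$ already suffices --- and you even spell out the verification $C'' \le U^\n_\circ$ that the paper's proof of Corollary~\ref{prop:intersectionsimple} leaves implicit.
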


We prove this theorem in several steps and refer to \Cref{fig:pathswithsamestat} for graphical illustrations of the given arguments.

\begin{lemma}
  \label{lem:neccondsimple}
  Let $C \in \latpaths{n,d}$ with $\stat_\Ltriv(C) = \{j_1 < \dots < j_\ell\}$.
  Then
  \[
    \ell \leq d \leq n-\ell\quad \text{and}\quad j_k \geq 2k\text{ for all }1 \leq k \leq \ell.
  \]
\end{lemma}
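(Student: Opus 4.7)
My plan is to work directly from the definition of the marking path $P := \mar{\Ltriv}{C}$, which consists of $d$ steps, each an $\e$ or a $\d$, and is built greedily so as to stay weakly below~$C$. It will be convenient to introduce two height functions. Let $E_1 < \cdots < E_d$ be the positions of the east steps of $C$ in its word, set $n(c) := E_c - c$ for $c = 1, \dots, d$ (the height of $C$ just after its $c$-th east step), and let $m(c)$ denote the height of $P$ after its first $c$ steps. A direct inspection of the ceiling of $C$ in the strip between columns $c-1$ and $c$ shows that the greedy rule defining $P$ rewrites as the recurrence: $P$ takes a diagonal at step $c$ (so $m(c) = m(c-1)+1$) precisely when $m(c-1) < n(c)$, and takes an east step (so $m(c) = m(c-1) = n(c)$) otherwise.

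From the statistic's definition, the $c$-th east step of $C$ is unmarked exactly when $P$ takes a diagonal at step $c$. Hence $\ell$ equals the total number of diagonals of $P$, namely $m(d)$. Since $P$ stays weakly below $C$ and both end at horizontal coordinate $d$, one has $m(d) \leq n - d$, giving $\ell \leq n - d$; the bound $\ell \leq d$ is automatic because $P$ has only $d$ steps in total. Combined, these yield $\ell \leq d \leq n - \ell$.

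For the position bounds, let $c_1 < \cdots < c_\ell$ be the indices of the diagonal steps of $P$, so that the $k$-th unmarked east step of $C$ is the $c_k$-th east step of $C$ and thus sits at position $j_k = E_{c_k} = c_k + n(c_k)$. Before its $k$-th diagonal, $P$ has taken exactly $k-1$ diagonals, so $m(c_k - 1) = k - 1$. The diagonal rule then forces $k - 1 = m(c_k - 1) < n(c_k)$, i.e.\ $n(c_k) \geq k$. Combining with the obvious $c_k \geq k$,
\[
    j_k \ = \ c_k + n(c_k) \ \geq \ k + k \ = \ 2k.
\]

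The only nontrivial piece of bookkeeping is the equivalence of the graphical ``diagonal-whenever-possible'' rule with the arithmetic recurrence on $m(c)$ above; once that translation is in place, each of the three inequalities reduces to counting diagonals of $P$ and reading off the height of $P$ at the moment each occurs.
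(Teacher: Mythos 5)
Your proof is correct and takes essentially the same approach as the paper: both arguments count the diagonal steps of the marking path to get $\ell = m(d) \le \min(d, n-d)$, and both use the ``weakly below $C$'' condition at the $k$\th{} diagonal to get the bound $j_k \ge 2k$. The main difference is presentational: the paper states the $j_k \ge 2k$ bound as a one-line contradiction (``the $j_k$\th{} step would be a marked east step''), while you make the greedy rule explicit as a recurrence on the height $m(c)$ and then derive $j_k = c_k + n(c_k) \ge k+k$ directly, which is a slightly cleaner and more rigorous rendering of the same idea.
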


\begin{proof}
  We have $\stat_\Ltriv(C) \subseteq \east{C}$ and therefore $\ell = |\stat_\Ltriv(C)| \leq |\east{C}| = d$.
  Moreover, the marking path $\mar{\Ltriv}{C}$ consists of $\ell$ diagonal steps and of $d-\ell$ east steps and thus ends at the point $(d,\ell)$.
  Because $C$ ends at $(d,n-d)$ and is weakly above $\mar{\Ltriv}{C}$, we also have $\ell \leq n-d$ or, equivalently, $d \leq n-\ell$.

  The second property $j_k \geq 2k$ follows from the observation that, if there was a smallest~$k$ with $j_k < 2k$, the realization of the $j_k$-th step (which is an east step) in the path~$C$ would also be an east step of its marking path~$\mar{\Ltriv}{C}$ and thus $j_k \notin \stat_\Ltriv(C)$.
\end{proof}

\begin{lemma}
  \label{lem:suffcondsimple}
  Fix~$n,d$ and $J \subseteq \{1,\dots,n\}$ with $|J| = \ell$ such that $\ell \leq d < n-\ell$.
  Then there is an explicit bijection
  \[
  \bigset{C \in \latpaths{n,d}}{\stat_\Ltriv(C) = J} \
  \tilde\longleftrightarrow \ \bigset{C' \in
  \latpaths{n,d+1}}{\stat_\Ltriv(C') = J}.
  \]
\end{lemma}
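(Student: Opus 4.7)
The plan is to identify an explicit canonical north step of $C$ that can be toggled into a marked east step. For a path $C \in \latpaths{n,d}$ and $i \in \{1,\dots,n+1\}$, let $N_i(C) = |\{j < i : C_j = \n\}|$, $U_i(C) = |J \cap \{1,\dots,i-1\}|$, and set $\delta_i(C) := N_i(C) - U_i(C)$. The condition $\stat_\Ltriv(C) = J$ is equivalent to $\delta_i(C) \ge 0$ throughout, with $\delta_i(C) = 0$ at every marked east step (i.e.\ $C_i = \e$ and $i \notin J$) and $\delta_i(C) > 0$ at every $i \in J$. The hypothesis $d < n-\ell$ says exactly $\delta_{n+1}(C) \ge 1$, so the index
\[
  i_0 := \max\{ i \in \{1,\dots,n\} : \delta_i(C) = 0 \}
\]
is well defined. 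A one-line case analysis on $C_{i_0}$ forces $C_{i_0} = \n$: a marked east would give $\delta_{i_0+1}(C) = 0$, contradicting maximality, and an unmarked east would require $\delta_{i_0}(C) > 0$. Define $\Phi(C) = C'$ by setting $C'_{i_0} = \e$ and $C'_i = C_i$ otherwise, so $C' \in \latpaths{n,d+1}$.

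The key verification, and the main obstacle, is $\stat_\Ltriv(\Phi(C)) = J$. From $\delta_i(C') = \delta_i(C)$ for $i \le i_0$ and $\delta_i(C') = \delta_i(C) - 1$ for $i > i_0$, positions $\le i_0$ keep their marked/unmarked status, and position $i_0$ becomes a new marked east step (since $\delta_{i_0}(C') = 0$). The only concerns past $i_0$ are (a) a marked east of $C$ at some $j > i_0$, which would now satisfy $\delta_j(C') = -1$, and (b) a position $j \in J$ with $j > i_0$ and $\delta_j(C) = 1$, which would fail $\delta_j(C') > 0$. Both are ruled out by the maximality of $i_0$: one has $\delta_j(C) \ge 1$ for all $j > i_0$, so a marked east at such $j$ is impossible; and if $j \in J$ with $\delta_j(C) = 1$ and $j > i_0$, then $\delta_{j+1}(C) = 0$ with $j+1 > i_0$, contradicting maximality. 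Hence every validity condition is preserved under the shift, and $\stat_\Ltriv(\Phi(C)) = J$.

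To close, define the inverse $\Psi$ by turning the rightmost marked east step of $C'$ (which exists because $C'$ has $d+1-\ell \ge 1$ marked east steps) back into a north step. A symmetric shift argument ($\delta$ increases by $1$ past the toggled position) shows $\Psi(C')$ is a valid path in $\latpaths{n,d}$ with statistic $J$. The two maps are inverses: in $\Phi(C)$, no marked east lies strictly beyond $i_0$ by the claim above and a new one is created at $i_0$, so $\Phi(C)$'s rightmost marked east is at $i_0$, whence $\Psi(\Phi(C)) = C$; and in $\Psi(C')$, the toggled index $i_1$ is the maximal position with $\delta = 0$, since past $i_1$ the $\delta$-values are strictly positive after the $+1$ shift, so $\Phi(\Psi(C')) = C'$.
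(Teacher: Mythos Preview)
Your proof is correct and is essentially the paper's argument recast in algebraic language: the condition $\delta_i(C)=0$ is precisely ``step~$i$ of $C$ starts on the marking path $\mar{\Ltriv}{C}$'', so your index $i_0$ coincides with the paper's ``last north step whose realization starts on the marking path'', and your inverse (toggling the rightmost marked east step) is the paper's ``last east step whose realization is an east step of the marking path''. The verifications you carry out via the $\pm 1$ shift in $\delta$ past the toggled index are exactly what the paper summarizes by saying the relative positions of $C$ and its marking path are preserved up to a horizontal unit shift.
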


\begin{proof}
    Let $C \in \latpaths{n,d}$ such that $|\stat_\Ltriv(C)| = \ell$.  As we
    have already seen in the proof of \Cref{lem:neccondsimple}, the marking
    path of~$C$ ends at $(d,\ell)$.  Because~$C$ ends at $(d,n-d)$ and $\ell <
    n-d$, the path~$C$ contains a unique last north step whose realization
    starts on the marking path.  Replacing this north step by an east step
    yields a path~$C' \in \latpaths{n,d+1}$ for which $\stat_\Ltriv(C') =
    \stat_\Ltriv(C)$.

    On the other hand, let $C' \in \latpaths{n,d+1}$ such that
    $|\stat_\Ltriv(C')| = \ell$.  Because $d+1 > \ell$, the path $C'$ contains
    a last east step whose realization is also an east step of its marking
    path $\mar{\Ltriv}{C'}$.  Replacing this east step by a north step yields
    a path~$C \in \latpaths{n,d}$ with $\stat_\Ltriv(C) = \stat_\Ltriv(C')$.

    Those two operations are inverses of each other and are thus the desired
    bijection.  This can be seen as follows: Consider $C$ and its marking path
    $P = \mar{\Ltriv}{C}$, and also $C'$ and its marking path $P' =
    \mar{\Ltriv}{C'}$, and let~$i$ be the index of the north step in~$C$ that
    is replaced in $C'$ by an east step (or vice versa).  Then the relative
    positions of~$C$,~$P$ and, respectively, the relative positions
    of~$C'$,~$P'$ coincide, except that $C'$ is horizontally one step closer
    to~$P'$ than~$C$ is to~$P$.  We refer to \Cref{fig:pathswithsamestat} for
    several examples.
\end{proof}

\begin{proposition}
\label{prop:constructpathfromstatsimple}
  Fix~$n,d$ and $J = \{j_1 < \cdots < j_\ell\} \subseteq \{1,\dots,n\}$.
  Then there exists a path~$C \in \latpaths{n,d}$ with $\stat_\Ltriv(C) = J$ if and only if the two conditions
  \[
    j_k \geq 2k\text{ for all }1 \leq k \leq \ell \quad\text{and}\quad
    \ell \leq d \leq n-\ell
  \]
  are both satisfied.
  Moreover, the path with this property is unique in this case.
\end{proposition}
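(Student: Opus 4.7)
\emph{Plan.} Necessity of both conditions is already \Cref{lem:neccondsimple}, so the remaining task is to establish existence and uniqueness under the stated hypotheses. My approach is to use \Cref{lem:suffcondsimple} to reduce to the extreme case $d = \ell$ and then settle that case by direct inspection. Iterating the bijection of that lemma (and its inverse) moves freely among all $d \in [\ell, n-\ell]$ while preserving the cardinality of the fibre $\{C \in \latpaths{n,d} : \stat_\Ltriv(C) = J\}$, so it suffices to treat the single case $d = \ell$.

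When $d = \ell$, any candidate $C$ satisfies $J = \stat_\Ltriv(C) \subseteq \east{C}$ with $|J| = \ell = d = |\east{C}|$, forcing $J = \east{C}$. This already pins $C$ down: it must be the unique path in $\latpaths{n,\ell}$ whose east steps occupy exactly the positions $j_1, \ldots, j_\ell$ of its word. Note that such a path is automatically available since $J \subseteq \{1,\ldots,n\}$, and the remaining inequality $\ell \leq n - \ell$ needed for $\latpaths{n,\ell}$ to be nonempty is then implied by $j_\ell \geq 2\ell$ combined with $j_\ell \leq n$. It remains to verify that for this particular $C$ one indeed has $\stat_\Ltriv(C) = J$, i.e., that the marking path $P = \mar{\Ltriv}{C}$ consists entirely of diagonal steps so that no east step of $C$ gets marked.

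For this verification I will write $h_k := j_k - k$ for the $y$-coordinate of $C$ immediately before its $k$-th east step, and $p_{k-1}$ for the $y$-coordinate of $P$ after $k-1$ steps. Unwinding the greedy rule defining $P$, its $k$-th step is a diagonal precisely when $p_{k-1} + 1 \leq h_k$ (any larger value of $p_{k-1}+1$ would push the diagonal above the horizontal segment traced out by the $k$-th east step of $C$). A short induction then closes the loop: assuming the first $k-1$ steps of $P$ are diagonals gives $p_{k-1} = k-1$, and the diagonal condition at step $k$ reduces to $k \leq j_k - k$, i.e., $j_k \geq 2k$, which is exactly the hypothesis. I expect the only subtlety to be carefully unwinding the definition of the marking path at intermediate $x$-values; conceptually the proposition reduces completely to the transparent extreme case $d = \ell$, with \Cref{lem:suffcondsimple} propagating existence and uniqueness to arbitrary $d \in [\ell, n-\ell]$.
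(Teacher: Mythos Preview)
Your proposal is correct and follows essentially the same strategy as the paper: both invoke \Cref{lem:neccondsimple} for necessity, settle the anchor case $d=\ell$ by taking the unique path with $\east{C}=J$, and then propagate existence and uniqueness to all $d\in[\ell,n-\ell]$ via the bijection of \Cref{lem:suffcondsimple}. The only difference is that you actually spell out the verification that $\stat_\Ltriv(C)=J$ when $d=\ell$ (via the inductive check $p_{k-1}=k-1$ and $j_k\ge 2k$), whereas the paper simply asserts it.
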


\begin{proof}
  We have seen in \Cref{lem:neccondsimple} that these two conditions are both necessary for such a path to exist.
  For $d = \ell$ the (unique) path~$C$ with $\east{C} = \{j_1,\dots,j_\ell\}$ has the desired property $\stat_\Ltriv(C) = \east{C}$.
  It then follows with \Cref{lem:suffcondsimple} that there exists, for any~$d$ with $\ell \leq d \leq n-\ell$, a unique path~$C'$ in $\latpaths{n,d}$ with $\stat_\Ltriv(C') = J$.
  (This path is obtained from $C \in \latpaths{n,\ell}$ by multiple replacements of north steps by east steps as described.)
\end{proof}

\begin{figure}
  \centering
  \scalebox{0.7}{
    \begin{tikzpicture}[scale=0.75]
      \draw[dotted] (0, 0) grid (7,11);
      \drawPath{0,0,0,0,0,0,0,1,1,1,1,1,1,1,1,1,1,1}{3}{opacity=0.2}{(0,0)}
      \drawPath{1,1,0,1,0,1,1,0,1,0,1,1,0,1,1,0,0,1}{3}{black}{(0,0)}
      \drawPath{2,2,2,2,2,2,2}{2}{red}{(0,0)}
      \node at ( 0.5, 2.4) {$3$};
      \node at ( 1.5, 3.4) {$5$};
      \node at ( 2.5, 5.4) {$8$};
      \node at ( 3.5, 6.4) {$10$};
      \node at ( 4.5, 8.4) {$13$};
      \node at ( 5.5,10.4) {$16$};
      \node at ( 6.5,10.4) {$17$};
      \draw[line width=6, opacity=0.5] (0, 0) -- (0,1);
      \draw[fill=black] (7,11) circle (.15);
      \node[fill=white,inner sep=2pt] at ( 7,11.6) {$(7,11)$};
    \end{tikzpicture}

    \qquad

    \begin{tikzpicture}[scale=0.75]
      \draw[dotted] (0, 0) grid (8,10);
      \drawPath{0,0,0,0,0,0,0,0,1,1,1,1,1,1,1,1,1,1}{3}{opacity=0.2}{(0,0)}
      \drawPath{0,1,0,1,0,1,1,0,1,0,1,1,0,1,1,0,0,1}{3}{black}{(0,0)}
      \drawPath{0,2,2,2,2,2,2,2}{2}{red}{(0,0)}
      \node at ( 1.5, 1.4) {$3$};
      \node at ( 2.5, 2.4) {$5$};
      \node at ( 3.5, 4.4) {$8$};
      \node at ( 4.5, 5.4) {$10$};
      \node at ( 5.5, 7.4) {$13$};
      \node at ( 6.5, 9.4) {$16$};
      \node at ( 7.5, 9.4) {$17$};
      \draw[line width=6, opacity=0.5] (3,2) -- (3,3);
      \draw[fill=black] (8,10) circle (.15);
      \node[fill=white,inner sep=2pt] at ( 8,10.6) {$(8,10)$};
    \end{tikzpicture}

    \qquad

    \begin{tikzpicture}[scale=0.75]
      \draw[dotted] (0, 0) grid (9,9);
      \drawPath{0,0,0,0,0,0,0,0,0,1,1,1,1,1,1,1,1,1}{3}{opacity=0.2}{(0,0)}
      \drawPath{0,1,0,1,0,0,1,0,1,0,1,1,0,1,1,0,0,1}{3}{black}{(0,0)}
      \drawPath{0,2,2,0,2,2,2,2,2}{2}{red}{(0,0)}
      \node at ( 1.5, 1.4) {$3$};
      \node at ( 2.5, 2.4) {$5$};
      \node at ( 4.5, 3.4) {$8$};
      \node at ( 5.5, 4.4) {$10$};
      \node at ( 6.5, 6.4) {$13$};
      \node at ( 7.5, 8.4) {$16$};
      \node at ( 8.5, 8.4) {$17$};
      \draw[line width=6, opacity=0.5] (6,4) -- (6,5);
      \draw[fill=black] (9,9) circle (.15);
      \node[fill=white,inner sep=2pt] at (9,9.6) {$(9,9)$};
    \end{tikzpicture}
  }

  \bigskip

  \scalebox{0.72}{
    \begin{tikzpicture}[scale=0.75]
      \draw[dotted] (0, 0) grid (10,8);
      \drawPath{0,0,0,0,0,0,0,0,0,0,1,1,1,1,1,1,1,1}{3}{opacity=0.2}{(0,0)}
      \drawPath{0,1,0,1,0,0,1,0,1,0,0,1,0,1,1,0,0,1}{3}{black}{(0,0)}
      \drawPath{0,2,2,0,2,2,0,2,2,2}{2}{red}{(0,0)}
      \node at ( 1.5, 1.4) {$3$};
      \node at ( 2.5, 2.4) {$5$};
      \node at ( 4.5, 3.4) {$8$};
      \node at ( 5.5, 4.4) {$10$};
      \node at ( 7.5, 5.4) {$13$};
      \node at ( 8.5, 7.4) {$16$};
      \node at ( 9.5, 7.4) {$17$};
      \draw[line width=6, opacity=0.5] (10,7) -- (10,8);
      \draw[fill=black] (10,8) circle (.15);
      \node[fill=white,inner sep=2pt] at (10,8.6) {$(10,8)$};
    \end{tikzpicture}

    \qquad

    \begin{tikzpicture}[scale=0.75]
      \draw[dotted] (0, 0) grid (11,7);
      \drawPath{0,0,0,0,0,0,0,0,0,0,0,1,1,1,1,1,1,1}{3}{opacity=0.2}{(0,0)}
      \drawPath{0,1,0,1,0,0,1,0,1,0,0,1,0,1,1,0,0,0}{3}{black}{(0,0)}
      \drawPath{0,2,2,0,2,2,0,2,2,2,0}{2}{red}{(0,0)}
      \node at ( 1.5, 1.4) {$3$};
      \node at ( 2.5, 2.4) {$5$};
      \node at ( 4.5, 3.4) {$8$};
      \node at ( 5.5, 4.4) {$10$};
      \node at ( 7.5, 5.4) {$13$};
      \node at ( 8.5, 7.4) {$16$};
      \node at ( 9.5, 7.4) {$17$};
      \draw[fill=black] (11,7) circle (.15);
      \node[fill=white,inner sep=2pt] at (11,7.6) {$(11,7)$};
    \end{tikzpicture}
  }
  \caption{All paths~$C \in \latpaths{18,d}$ with $\stat_\Ltriv(C) = \{ 3,5,8,10,13,16,17\}$, one for each $d \in \{ 7, \dots, 11\}$.}
  \label{fig:pathswithsamestat}
\end{figure}

Let $U \in \latpaths{n,d}$.
For the following treatment, we set
\begin{equation}
\begin{aligned}
  \paths{U,\Ltriv}^{\e} &= \paths{U,\Ltriv}^{\e,-} \sqcup \paths{U,\Ltriv}^{\e,+} \\
  \paths{U,\Ltriv}^{\e}_\circ &= \paths{U,\Ltriv}^{\e,-}_\circ \sqcup \paths{U,\Ltriv}^{\e,+}_\circ
\end{aligned}
\label{eq:Ltrive}
\end{equation}
where $\paths{U,\Ltriv}^{\e,-}$ are the paths
that go weakly below the diagonal through the point $(n-d,d)$ and
$\paths{U,\Ltriv}^{\e,+}$ is its complement in $\paths{U,\Ltriv}^{\e}$. The sets
$\paths{U,\Ltriv}^{\e,-}_\circ$ and $\paths{U,\Ltriv}^{\e,+}_\circ$ are
obtained by removing the last east step.

\begin{corollary}
\label{prop:intersectionsimple}
  Let $U \in \latpaths{n,d}$ with $U_n = \e$.
  Then
  \[
  \bigset{\stat_\Ltriv(C)}{ C \in \paths{U,\Ltriv}^\n_\circ} = \bigset{\stat_\Ltriv(C)}{ C \in \paths{U,\Ltriv}^{\e,-}_\circ}.
  \]
\end{corollary}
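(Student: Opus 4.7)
The plan is to establish the equality by setting up a $\stat_\Ltriv$-preserving bijection between the two indexing collections of paths. The central tool is \Cref{lem:suffcondsimple}, which provides an explicit $\stat_\Ltriv$-preserving bijection between $\{C \in \latpaths{n-1,d-1} : \stat_\Ltriv(C) = J\}$ and $\{C' \in \latpaths{n-1,d} : \stat_\Ltriv(C') = J\}$ for each statistic $J$ satisfying the admissibility conditions of \Cref{lem:neccondsimple}. Combined with the uniqueness statement of \Cref{prop:constructpathfromstatsimple}, this converts the problem from comparing statistic sets to comparing the admissibility of specific paths within the two prescribed boundary regions.

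First I would verify the inclusion from right to left. For $C \in \paths{U,\Ltriv}^{\e,-}_\circ$, living in $\latpaths{n-1,d-1}$ with $\stat_\Ltriv(C) = J$, I would apply the bijection of \Cref{lem:suffcondsimple} to produce the unique $\widetilde C \in \latpaths{n-1,d}$ with the same statistic, obtained by replacing the last north step of $C$ whose realization starts on its marking path by an east step. The key is then to show $\widetilde C \in \paths{U^\n_\circ,\Ltriv^\n_\circ} = \paths{U,\Ltriv}^\n_\circ$. This is where the diagonal defining $\paths{U,\Ltriv}^{\e,-}$ enters: the condition that the full path $C\cdot\e$ stays weakly below the diagonal through $(n-d,d)$ is precisely what guarantees that the north-for-east swap cannot push $\widetilde C$ above $U^\n_\circ$.

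For the reverse inclusion I would take $\widetilde C \in \paths{U,\Ltriv}^\n_\circ$ and apply the inverse bijection, replacing the last east step of $\widetilde C$ that lies on its marking path by a north step to produce $C \in \latpaths{n-1,d-1}$ with the same statistic $J$. I would then verify that the extended path $C\cdot\e$ lies in $\paths{U,\Ltriv}^{\e,-}$: the hypothesis $U_n = \e$ ensures that appending a trailing east step keeps the path weakly below $U$, while the east-for-north replacement near the end localizes the path weakly below the diagonal through $(n-d,d)$, placing it in the $(-)$-part of the decomposition.

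The main obstacle will be the geometric bookkeeping involved in verifying that the diagonal through $(n-d,d)$ is exactly the threshold distinguishing those truncated paths in $\paths{U,\Ltriv}^\e_\circ$ whose bijective images under \Cref{lem:suffcondsimple} land in $\paths{U,\Ltriv}^\n_\circ$. I expect this to reduce to a direct comparison of step positions where the bijection performs the swap with the slope-one line through $(n-d,d)$, using the recursive description of the marking path; here the fact that the marking path greedily uses diagonals should ensure that the critical "last admissible swap" is captured precisely by this line.
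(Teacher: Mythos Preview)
Your bijective strategy hits a genuine obstruction in the reverse inclusion. You propose to start from an arbitrary $\widetilde C\in\paths{U,\Ltriv}^\n_\circ\subseteq\latpaths{n-1,d}$ and apply the inverse map of \Cref{lem:suffcondsimple}, which replaces the last \emph{marked} east step of $\widetilde C$ by a north step. But that inverse is only defined when $\widetilde C$ possesses a marked east step, i.e.\ when $\ell:=|\stat_\Ltriv(\widetilde C)|<d$, and nothing in the hypothesis $\widetilde C\in\paths{U,\Ltriv}^\n_\circ$ forces this. Concretely, for $n=3$, $d=1$, $U=\n\n\e$ one finds $\paths{U,\Ltriv}^\n_\circ=\{\e\n,\n\e\}$ with statistic values $\emptyset$ and $\{2\}$, while $\paths{U,\Ltriv}^{\e,-}_\circ=\{\n\n\}$ with the single value $\emptyset$; the two sides of the displayed equality are $\{\emptyset,\{2\}\}\neq\{\emptyset\}$. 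So the equality as printed is not correct, and no $\stat_\Ltriv$-preserving bijection between the two indexing sets can exist. Comparison with the general-version analogue (\Cref{prop:intersectionhard}) and with how the result is actually used in the proof of \Cref{thm:Lrecurrencesimple} indicates that the intended left-hand side is $\mathcal S^{\e}_{\mathcal L}\cap\mathcal S^{\n}_{\mathcal L}$ rather than $\mathcal S^{\n}_{\mathcal L}$ alone.

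For this corrected assertion the paper does not argue bijectively. It fixes $C\in\paths{U,\Ltriv}^\e_\circ$, sets $\ell=|\stat_\Ltriv(C)|$ and $k=d-1-\ell$, and invokes \Cref{prop:constructpathfromstatsimple} to convert ``some path in $\latpaths{n-1,d}$ realizes the same statistic'' into the numerical inequality $d\le n-1-\ell$, equivalently $k\ge 2d-n$, which is then identified with membership in $\paths{U,\Ltriv}^{\e,-}_\circ$. No companion path in $\paths{U,\Ltriv}^\n_\circ$ is ever exhibited, and in particular the upper-boundary verification you were gearing up for is not carried out. Your instinct there is sound: the short numerical argument literally shows only that the $(\e,-)$-statistics coincide with $\mathcal S^{\e}_{\mathcal L}$ intersected with the statistics realizable in all of $\latpaths{n-1,d}$, and closing the gap down to $\mathcal S^{\e}_{\mathcal L}\cap\mathcal S^{\n}_{\mathcal L}$ does require checking that the bijection of \Cref{lem:suffcondsimple} sends $C$ to a path weakly below $U^\n_\circ$.
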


\begin{proof}
  Let $C \in \paths{U,\Ltriv}^\e_\circ \subseteq \latpaths{n-1,d-1}$.
  Such a path exists by the assumption $U_n = \e$.
  \Cref{prop:constructpathfromstatsimple} implies that there exists a (then unique) path in $\latpaths{n-1,d}$ with the same statistic value as~$C$ if and only if $d \leq n-1-\ell$.
  Let~$k$ be the number of marked east steps in~$C$, this is, $k = d-1 - \ell$.
  Then $d \leq n-1-\ell$ if and only if $k \geq 2d-n$.
  This is the case if and only if $C \in \paths{U,\Ltriv}^{\e,-}_\circ$.
\end{proof}

\begin{proof}[Proof of \Cref{thm:Lrecurrencesimple}]
  Consider the disjoint decomposition
  \[
    \paths{U,\Ltriv} = \paths{U,\Ltriv}^{\e,-} \cup \paths{U,\Ltriv}^{\e,+} \cup \paths{U,\Ltriv}^{\n}\ .
  \]
  Observe that the definition of the statistic implies for a path $C \in \paths{U,\Ltriv}$ that
  \[
    n \in \stat_\Ltriv(C)\ \Longleftrightarrow\ C \in \paths{U,\Ltriv}^{\e,-}\ .
  \]
  If $U_n = \n$, we have $\Se_\mathcal{L} = \emptyset$ and thus $S_\mathcal{L} = \Sn_\mathcal{L}$.
  Otherwise, we have $U_n = \e$ and obtain
  \begin{align*}
    \bigset{\stat_\Ltriv(C)}{C \in \paths{U,\Ltriv}} =\ 
      &
      \bigset{ \stat_\Ltriv(C) \cup \{n\} }{ C \in \paths{U,\Ltriv}^{\e,-}_\circ}\ \cup
      \\
      &
      \bigset{ \stat_\Ltriv(C) \hspace*{32.5pt} }{ C  \in \paths{U,\Ltriv}^{\e,+}_\circ }\ \cup
      \\
      &
      \bigset{ \stat_\Ltriv(C) \hspace*{32.5pt} }{ C \in \paths{U,\Ltriv}^{\n}_\circ }.
  \end{align*}
  \Cref{prop:intersectionsimple} now implies \Cref{thm:Lrecurrencesimple}.
\end{proof}

\subsection*{Proof of \texorpdfstring{\Cref{thm:mainhard}}{main theorem (general version)}}

We next generalize and modify the arguments for the simple version to obtain the general version.
We show the following theorem and \Cref{thm:mainhard} then follows with \Cref{thm:lex}.

\begin{theorem}
\label{thm:Lrecurrencehard}
  Let $U,L \in \latpaths{n,d}$ with~$U$ weakly above~$L$ and set
  \begin{align*}
    \mathcal{S}_\mathcal{L} &= \bigset{\stat_L(C)}{C \in \paths{U,L}\hspace*{4.5pt}}, \\
    \mathcal{S}^\e_\mathcal{L} &= \bigset{\stat_L(C)}{C \in \paths{U,L}^\e_\circ}, \\
    \mathcal{S}^\n_\mathcal{L} &= \bigset{\stat_L(C)}{C \in \paths{U,L}^\n_\circ}.
    \intertext{We then have}
    \mathcal{S}_\mathcal{L} &= \mathcal{S}^\e_\mathcal{L}\ \cup\  \mathcal{S}^\n_\mathcal{L}\  \cup\  n * \big(\mathcal{S}^\e_\mathcal{L} \cap \mathcal{S}^n_\mathcal{L}\big). 
  \end{align*}
\end{theorem}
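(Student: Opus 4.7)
The plan is to mirror the structure of the proof of \Cref{thm:Lrecurrencesimple}, adapting each step to the non-trivial lower boundary $L$. As before, the crucial decomposition is $\paths{U,L}^\e = \paths{U,L}^{\e,-} \sqcup \paths{U,L}^{\e,+}$, where $\paths{U,L}^{\e,-}$ consists of those $C \in \paths{U,L}^\e$ with $n \in \stat_L(C)$ (i.e., the final east step of $C$ is unmarked). Assuming this, the goal splits into two main tasks: (i) an analog of \Cref{prop:constructpathfromstatsimple}, namely uniqueness and a characterization of existence of a path $C \in \latpaths{n,d}$ weakly above $L$ with $\stat_L(C) = J$, and (ii) an analog of \Cref{prop:intersectionsimple} identifying $\bigset{\stat_L(C)}{C \in \paths{U,L}^\n_\circ}$ with $\bigset{\stat_L(C)}{C \in \paths{U,L}^{\e,-}_\circ}$.

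The key technical obstacle, and the generalization of \Cref{lem:suffcondsimple}, is an explicit bijection between paths in $\latpaths{n,d}$ weakly above $L$ with $\stat_L(C) = J$ and the analogous set in $\latpaths{n,d+1}$, obtained by swapping an appropriately chosen step of $C$. In the simple case one flips the last north step of $C$ whose tail lies on the marking path. In our setting, $\mar{L}{C}$ may additionally contain north steps forced by the demarcation path $\dem{L}{C}$, so one has to identify the correct candidate step (plausibly the last north step of $C$ whose realization starts on $\mar{L}{C}$ and lies strictly above $\dem{L}{C}$) and verify that after the swap the new demarcation and marking paths change only in a controlled local way, so that $\stat_L$ is preserved and the path still lies weakly above $L$. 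I expect this to be the main obstacle: a careful case analysis on how the steps of $L$, $\dem{L}{C}$, and $\mar{L}{C}$ align at the swap position.

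Granted the bijection, the uniqueness/existence analog of \Cref{prop:constructpathfromstatsimple} follows by identifying the ``minimal'' $d$ for each prescribed statistic (the path in $\latpaths{n,\ell}$ whose east steps are precisely $J$, provided it lies weakly above $L$) and sliding up via the bijection to produce the unique admissible $C$ for each valid $d$. The analog of \Cref{prop:intersectionsimple} then translates the range condition on $d$ into the geometric membership $C \in \paths{U,L}^{\e,-}_\circ$: a path in $\paths{U,L}^\e_\circ$ lies in $\paths{U,L}^{\e,-}_\circ$ precisely when its statistic is also realized by some path in $\paths{U,L}^\n_\circ$. In particular this yields $\mathcal{S}^\n_\mathcal{L} \subseteq \mathcal{S}^\e_\mathcal{L}$, hence $\mathcal{S}^\e_\mathcal{L} \cap \mathcal{S}^\n_\mathcal{L} = \mathcal{S}^\n_\mathcal{L}$.

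Finally, the assembly is formally identical to the simple case. Using the disjoint decomposition $\paths{U,L} = \paths{U,L}^{\e,-} \sqcup \paths{U,L}^{\e,+} \sqcup \paths{U,L}^\n$ together with the observation that $n \in \stat_L(C)$ iff $C \in \paths{U,L}^{\e,-}$, one writes $\mathcal{S}_\mathcal{L}$ as the union of the three contributions from these pieces (the first shifted by $\{n\}$), which, in view of the previous step, combines into $\mathcal{S}^\e_\mathcal{L} \cup \mathcal{S}^\n_\mathcal{L} \cup n \ast (\mathcal{S}^\e_\mathcal{L} \cap \mathcal{S}^\n_\mathcal{L})$. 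The degenerate cases $U_n = \n$ (so $\mathcal{S}^\e_\mathcal{L} = \emptyset$) and $L_n = \e$ (so $\mathcal{S}^\n_\mathcal{L} = \emptyset$) are handled exactly as in the simple proof.
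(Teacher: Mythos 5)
Your overall strategy matches the paper's: decompose $\paths{U,L}$ by last step, split $\paths{U,L}^\e$ into $\paths{U,L}^{\e,-}$ (paths with $n \in \stat_L(C)$) and its complement, establish a step-swap bijection between $\latpaths{n,d}$ and $\latpaths{n,d+1}$ preserving the statistic (the analog of \Cref{lem:suffcondsimple}, namely \Cref{lem:suffcondhard}), deduce a uniqueness/existence result (\Cref{prop:constructpathfromstathard}), and assemble. The intersection identity you formulate, that $C \in \paths{U,L}^\e_\circ$ lies in $\paths{U,L}^{\e,-}_\circ$ precisely when its statistic is also realized by some path in $\paths{U,L}^\n_\circ$, is exactly \Cref{prop:intersectionhard}, and that is what the assembly actually uses. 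Two remarks, one a genuine error. First, the sentence ``In particular this yields $\mathcal{S}^\n_\mathcal{L} \subseteq \mathcal{S}^\e_\mathcal{L}$, hence $\mathcal{S}^\e_\mathcal{L} \cap \mathcal{S}^\n_\mathcal{L} = \mathcal{S}^\n_\mathcal{L}$'' is a non sequitur and is in fact false. Your intersection identity constrains which paths in $\paths{U,L}^\e_\circ$ have a statistic that also appears in $\mathcal{S}^\n_\mathcal{L}$, but says nothing about whether every element of $\mathcal{S}^\n_\mathcal{L}$ is realized by a path in $\paths{U,L}^\e_\circ$. Concretely, take $n=5$, $d=2$, $U = \e\n\n\n\e$, $L = \Ltriv = \e\e\n\n\n$. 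Then $\paths{U,\Ltriv}^\n_\circ$ contains $\e\e\n\n$, $\e\n\e\n$, $\e\n\n\e$ with statistics $\emptyset$, $\{3\}$, $\{4\}$, so $\mathcal{S}^\n_\mathcal{L} = \{\emptyset,\{3\},\{4\}\}$; but $\paths{U,\Ltriv}^\e_\circ = \{\e\n\n\n\}$, so $\mathcal{S}^\e_\mathcal{L} = \{\emptyset\}$, and $\mathcal{S}^\n_\mathcal{L} \not\subseteq \mathcal{S}^\e_\mathcal{L}$. Fortunately the assembly you describe only needs the identity $\{\stat_L(C) : C \in \paths{U,L}^{\e,-}_\circ\} = \mathcal{S}^\e_\mathcal{L} \cap \mathcal{S}^\n_\mathcal{L}$, so the false inclusion is not load-bearing; simply delete that clause. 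Second, a smaller point: the paper's step-swap lemma flips the last north step of $C$ whose realization starts on $\mar{L}{C}$, with the additional side condition $(d,n-d) \notin \mar{L}{C}$ guaranteeing such a step exists; your proposed qualifier ``lies strictly above $\dem{L}{C}$'' is not needed (the marking path never goes below the demarcation path anyway), and the side condition on the endpoint is the thing you should be explicit about.
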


We first adapt the definition of the demarcation path and the marking path to the slightly more general setting of $C \in \latpaths{n,d}$ and $L \in \latpaths{n,d'}$ with $d \leq d'$.
The definition of~$C$ being weakly above~$L$ generalizes verbatim and $d \leq d'$ is a necessary condition in this case.
One may then extend~$C$ and~$L$ by appending $d'-d$ many east steps to~$C$ and $d'-d$ many north steps to~$L$ to obtain two paths in $\latpaths{n+d'-d,d'}$.
The definitions of the $L$-demarcation path and the $L$-marking path of~$C$ are given by using these extensions.
\Cref{fig:modificationlemma} shows one such example.

Unfortunately, we do not have an analogue of \Cref{lem:neccondsimple} in the general situation.
We nonetheless still have the following lemma.

\begin{lemma}
  \label{lem:suffcondhard}
  Fix~$n,d<d'$ and $J \subseteq \{1,\dots,n\}$ with $|J| = \ell \leq d$ and $L \in \latpaths{n,d'}$.
  Then there is an explicit bijection
  \[
    \bigset{C \in \latpaths{n,d}}{ \begin{array}{c}
        C \text{ weakly above }L, \\
        \stat_ L(C) = J, \\
        (d,n-d) \notin \mar{L}{C}
                                    \end{array}
    }
    \tilde\longleftrightarrow
    \bigset{C' \in \latpaths{n,d+1}}{ \begin{array}{c}
        C' \text{ weakly above }L, \\
        \stat_L(C') = J
      \end{array}
    } .
  \]
\end{lemma}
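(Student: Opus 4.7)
The plan is to mirror the proof of Lemma~\ref{lem:suffcondsimple}, replacing its ``last north step starting on the marking path'' by a pivot that is well-defined in the general situation. For the forward map, let $C$ lie in the left-hand set. Since $C$ and $\mar{L}{C}$ share the starting point $(0,0)$ but, by hypothesis, the marking path does not visit $(d,n-d)$, there must exist a latest north step $C_i = \n$ whose initial vertex lies on $\mar{L}{C}$; call it the \emph{pivot}. Replacing the pivot by an east step yields $C' \in \latpaths{n,d+1}$. To see that $C'$ is still weakly above $L$, I would use that $\mar{L}{C}$ lies weakly above $\dem{L}{C}$, which in turn lies weakly above $L$ by the definition of the demarcation path; hence inserting an east step at the pivot's starting vertex cannot dip below $L$.

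For the inverse map, let $C'$ be in the right-hand set. Since $C'$ has $d+1$ east steps, of which exactly $d+1-\ell \geq 1$ are marked (that is, coincide with east steps of $\mar{L}{C'}$), I can take the latest such marked east step and replace it by a north step to obtain $C \in \latpaths{n,d}$. The condition $(d,n-d) \notin \mar{L}{C}$ follows because this modification lowers the $y$-coordinate of the endpoint of $\mar{L}{C}$ by one with respect to that of $\mar{L}{C'}$ in the relevant column.

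The main content is then to verify that both operations preserve $\stat_L$. I plan to establish this by a local analysis of how $\dem{L}{C}$ and $\mar{L}{C}$ change when the pivot step of~$C$ is flipped from $\n$ to $\e$. At index~$i$, the demarcation entry changes from $\d$ to $\e$ (if $L_i = \e$) or from $\d$ to $\emp$ (if $L_i = \n$), while the rest of $\dem{L}{C}$ is unaffected. Correspondingly, $\mar{L}{C'}$ coincides with $\mar{L}{C}$ outside the pivot column and acquires exactly one additional east step there, namely the new east step of~$C'$, which is therefore marked. Consequently the sets of unmarked east steps of~$C$ and~$C'$ agree, giving $\stat_L(C) = \stat_L(C') = J$. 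That the two maps are mutually inverse then follows by running this same local analysis in reverse.

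The main obstacle is the bookkeeping around the demarcation and marking paths, both of which shift when $C$ is modified. The delicate point is to show that the shift does not cascade past the pivot column, so that the marking structure to the right remains intact and the pivot selected by the inverse map is indeed the very column flipped by the forward map. This reduces to a finite case analysis on the local configurations of $(C_i, L_i)$ together with the step of $\mar{L}{C}$ at column~$i$, entirely analogous to (but slightly more case-heavy than) the picture in Figure~\ref{fig:pathswithsamestat}.
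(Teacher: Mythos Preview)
Your approach is essentially the paper's: the forward map (replace the last north step of $C$ whose realization starts on $\mar{L}{C}$ by an east step) and the inverse map (replace the last marked east step of $C'$ by a north step) coincide with those used in the proof of Lemma~\ref{lem:suffcondhard}, which the authors describe as a verbatim generalization of Lemma~\ref{lem:suffcondsimple}.

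Two points of caution. First, your demarcation case analysis at the pivot is slightly off: if $C_i=\n$ and $L_i=\n$, then $\dem{L}{C}_i=\n$ (not $\d$), so the change upon flipping is $\n\to\emp$ rather than $\d\to\emp$. Second, your assertion that ``$\mar{L}{C'}$ coincides with $\mar{L}{C}$ outside the pivot column and acquires exactly one additional east step there'' is not literally true as a statement about curves in the plane (the two marking paths do not even share an endpoint in the extended setting). The paper phrases the invariant more globally: the \emph{relative positions} of the triples $(C,\dem{L}{C},\mar{L}{C})$ and $(C',\dem{L}{C'},\mar{L}{C'})$ agree, with $C'$ horizontally one step closer to its demarcation and marking paths than $C$ is to its own. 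This global framing sidesteps the cascading issue you flag as ``the delicate point'': since both $C$ and $\dem{L}{C}$ drop by one vertical unit from the pivot onward, the greedy rule defining the marking path sees the same configuration and reproduces the same sequence of step types, so nothing propagates. Your local case analysis can be made to work, but you should rephrase it in terms of this uniform vertical shift rather than an inserted east step.
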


In the lemma, we mean by $(d,n-d) \notin \mar{L}{C}$ that the marking path $\mar{L}{C}$ of~$C$ does not go through the endpoint $(d,n-d)$ of~$C$.

\begin{proof}
  The proof of \Cref{lem:suffcondsimple} generalizes verbatim:
  Let $C \in \latpaths{n,d}$ be weakly above~$L$ such that $|\stat_L(C)| = \ell$.
  Because $(d,n-d) \notin \mar{L}{C}$, the path~$C$ contains a unique last north step whose realization start on the marking path $\mar{L}{C}$.
  Replacing this north step by an east step yields a path~$C' \in \latpaths{n,d+1}$ with $\stat_L(C') = \stat_L(C)$ which is also weakly above~$L$.
  
  On the other hand, let $C' \in \latpaths{n,d+1}$ be weakly above~$L$ such that $|\stat_L(C')| = \ell$.
  Because $d+1 > \ell$, the path $C'$ contains a last east step whose realization is also an east step of its marking path $\mar{L}{C'}$.
  Replacing this east step by a north step yields a path~$C \in \latpaths{n,d}$ with $\stat_L(C) = \stat_L(C')$ and $(d,n-d) \notin \mar{L}{C}$.

  Those two operations are inverses of each other and are thus the desired bijection.
  This can be seen as follows:
  Consider $C$, $D = \dem{L}{C}$ and $M = \mar{L}{C}$, and also $C'$, $D' = \dem{L}{C'}$ and $M' = \mar{L}{C'}$, and let~$i$ be the index of the north step in~$C$ that is replaced in $C'$ by an east step (or vice versa).
  Then the relative positions of~$C$,~$D$,~$M$ and, respectively, the relative positions of~$C'$,~$D'$,~$M'$ coincide, except that $C'$ is horizontally one step closer to~$D'$ and to~$M'$ than $C$ is to~$D$ and to~$M$.
  We refer to \Cref{fig:modificationlemma} for two examples.
\end{proof}

\begin{figure}
  \centering
  \scalebox{0.7}{
    \begin{tikzpicture}[scale=0.75]
      \draw[dotted] (0, 0) grid (11,13);
      \drawPath{0,0,0,1,0,0,0,1,1,0,1,0,0,1,1,1,1,0,0,1,1}{2}{opacity=0.2}{(0,0)}
      \drawPath{1,1,1,0,0,0,1,0,1,1,0,1,1,0,1,1,1,0,0,1,1}{3}{black}{(0,0)}
      \drawPath{2,2,2,0,0,2,1,2,2,2,1,1,1,0,0,1,1}{2}{blue, dotted}{(0,0)}
      \drawPath{2,2,2,2,2,2,2,2,2,1,1,2,2}{2}{red}{(0,0)}
      \node at ( 0.5, 3.4) {$4$};
      \node at ( 1.5, 3.4) {$5$};
      \node at ( 2.5, 3.4) {$6$};
      \node at ( 3.5, 4.4) {$8$};
      \node at ( 4.5, 6.4) {$11$};
      \node at ( 5.5, 8.4) {$14$};
      \node at ( 6.5,11.4) {$18$};
      \node at ( 7.5,11.4) {$19$};
      \draw[line width=6, opacity=0.5] (4,4) -- (4,5);
      \draw[fill=black] (4, 4) circle (.15);
      \draw[very thick] (12,6) edge[->] (12.8,6);
      \draw[fill=black] (8,13) circle (.15);
      \draw[fill=black, opacity=0.5] (11,10) circle (.15);
      \node at (13.3,6) {};
    \end{tikzpicture}
    \begin{tikzpicture}[scale=0.75]
      \draw[dotted] (0, 0) grid (11,12);
      \drawPath{0,0,0,1,0,0,0,1,1,0,1,0,0,1,1,1,1,0,0,1,1}{2}{opacity=0.2}{(0,0)}
      \drawPath{1,1,1,0,0,0,1,0,0,1,0,1,1,0,1,1,1,0,0,1,1}{3}{black}{(0,0)}
      \drawPath{2,2,2,0,0,2,2,2,2,1,1,1,0,0,1,1}{2}{blue, dotted}{(0,0)}
      \drawPath{2,2,2,2,0,2,2,2,2,1,1,2,2}{2}{red}{(0,0)}
      \node at ( 0.5, 3.4) {$4$};
      \node at ( 1.5, 3.4) {$5$};
      \node at ( 2.5, 3.4) {$6$};
      \node at ( 3.5, 4.4) {$8$};
      \node at ( 5.5, 5.4) {$11$};
      \node at ( 6.5, 7.4) {$14$};
      \node at ( 7.5,10.4) {$18$};
      \node at ( 8.5,10.4) {$19$};
      \draw[line width=6, opacity=0.5] (4,4) -- (5,4);
      \draw[fill=black] (4, 4) circle (.15);
      \draw[fill=black] (9,12) circle (.15);
      \draw[fill=black, opacity=0.5] (11,10) circle (.15);
    \end{tikzpicture}
  }

  \vspace{20pt}

  \scalebox{0.7}{
    \begin{tikzpicture}[scale=0.75]
      \draw[dotted] (0, 0) grid (10,10);
      \drawPath{0,0,1,1,0,1,0,0,0,1,0,0,0,0,1,1}{2}{opacity=0.2}{(0,0)}
      \drawPath{1,1,0,1,1,1,0,1,1,0,0,1,1,1,0,0}{3}{black}{(0,0)}
      \drawPath{2,2,1,2,1,0,2,2,0,2,2,2}{2}{blue, dotted}{(0,0)}
      \drawPath{2,2,1,2,1,2,2,2,2,2,0,0}{2}{red}{(0,0)}
      \node at ( 0.5, 2.4) {$3$};
      \node at ( 1.5, 5.4) {$7$};
      \node at ( 2.5, 7.4) {$10$};
      \node at ( 3.5, 7.4) {$11$};
      \node at ( 4.5,10.4) {$15$};
      \node at ( 5.5,10.4) {$16$};
      \draw[line width=6, opacity=0.5] (0,0) -- (0,1);
      \draw[fill=black] (0, 0) circle (.15);
      \draw[very thick] (11,5) edge[->] (11.8,5);
      \draw[fill=black] (6,10) circle (.15);
      \draw[fill=black, opacity=0.5] (10,6) circle (.15);
      \node at (12.3,6) {};
    \end{tikzpicture}
    \begin{tikzpicture}[scale=0.75]
      \draw[dotted] (0, 0) grid (10,9);
      \drawPath{0,0,1,1,0,1,0,0,0,1,0,0,0,0,1,1}{2}{opacity=0.2}{(0,0)}
      \drawPath{0,1,0,1,1,1,0,1,1,0,0,1,1,1,0,0}{3}{black}{(0,0)}
      \drawPath{0,2,1,2,1,0,2,2,0,2,2,2}{2}{blue, dotted}{(0,0)}
      \drawPath{0,2,1,2,1,2,2,2,2,2,0,0}{2}{red}{(0,0)}
      \node at ( 1.5, 1.4) {$3$};
      \node at ( 2.5, 4.4) {$7$};
      \node at ( 3.5, 6.4) {$10$};
      \node at ( 4.5, 6.4) {$11$};
      \node at ( 5.5, 9.4) {$15$};
      \node at ( 6.5, 9.4) {$16$};
      \draw[line width=6, opacity=0.5] (0,0) -- (1,0);
      \draw[fill=black] (0, 0) circle (.15);
      \draw[fill=black] (7, 9) circle (.15);
      \draw[fill=black, opacity=0.5] (10,6) circle (.15);
    \end{tikzpicture}
  }
  \caption{\label{fig:modificationlemma}Two possible replacements of north steps starting on the marking path.}
\end{figure}
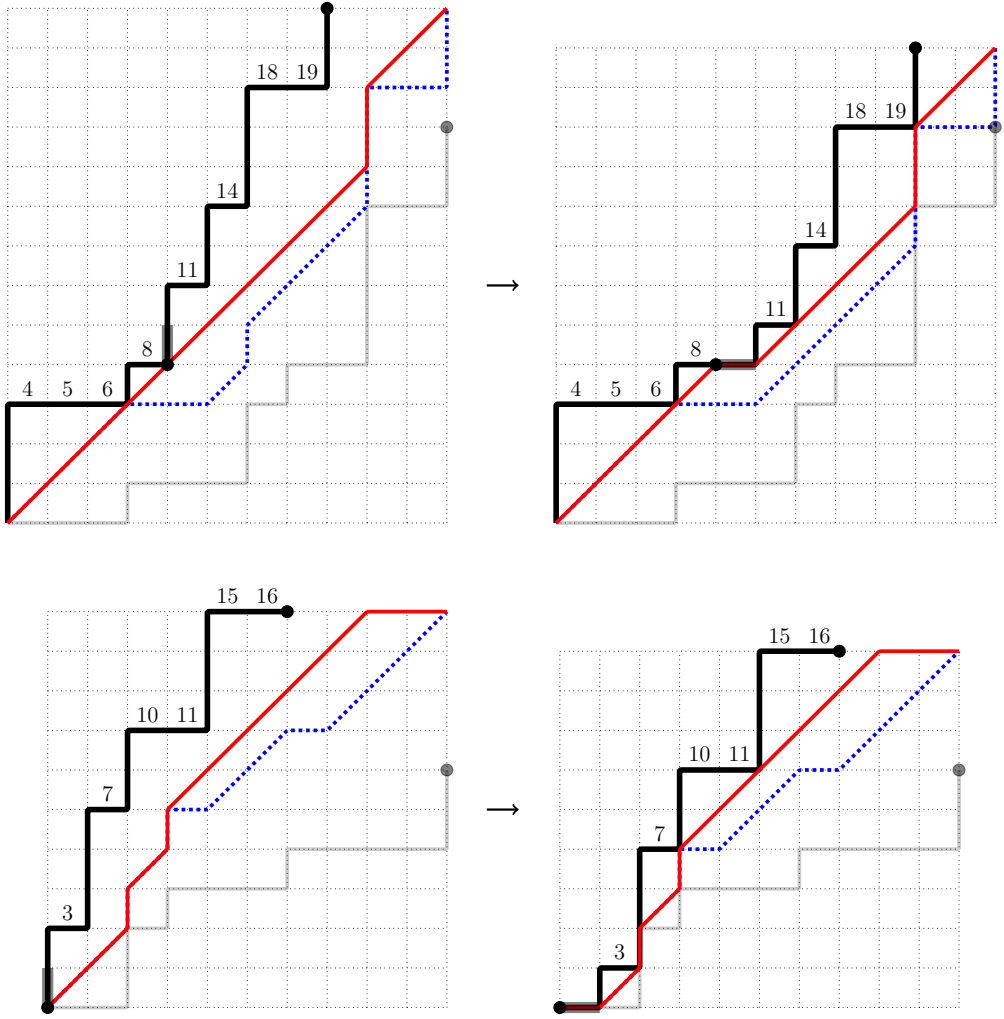

\begin{proposition}
\label{prop:constructpathfromstathard}
  Fix $n,d < d'$ and $J \subseteq \{1,\dots,n\}$.
  Let $L \in \latpaths{n,d'}$ such that the unique path $E \in \latpaths{n,\ell}$ with $\east{C} = J$ is weakly above~$L$.
  Set~$k$ to be the smallest nonnegative integer such that the point $(\ell+k,n-\ell)$ is contained in $\mar{L}{E}$.
  Then there exists~$C \in \latpaths{n,d}$ with $\stat_L(C) = J$ if and only if the two conditions
  \[
  \stat_L(E) = J \quad\text{and}\quad\ell \leq d \leq \ell+k.
  \]
  are both satisfied.
  Moreover, the path with this property is unique in this case.
\end{proposition}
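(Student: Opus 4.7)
The plan is to mirror the argument used in the proof of \Cref{prop:constructpathfromstatsimple}, replacing \Cref{lem:suffcondsimple} by \Cref{lem:suffcondhard}. I would split the proof into three parts: (i) necessity of the condition $\stat_L(E) = J$, (ii) iterative construction of a unique $C^{(d)} \in \latpaths{n,d}$ for each admissible $d$, and (iii) identification of the explicit threshold $d = \ell + k$.

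For part (i), when $d = \ell$ the only element of $\latpaths{n,\ell}$ whose east-step set equals $J$ is $E$ itself, so existence of some path with the prescribed statistic is equivalent to $\stat_L(E) = J$. For general $d \ge \ell$, if some $C \in \latpaths{n,d}$ weakly above $L$ satisfies $\stat_L(C) = J$, iteratively applying the inverse direction of \Cref{lem:suffcondhard} produces a chain $C = C^{(d)} \mapsto C^{(d-1)} \mapsto \cdots \mapsto C^{(\ell)}$ of paths weakly above $L$, all with statistic $J$ and east counts decreasing by one per step. The terminal path lies in $\latpaths{n,\ell}$ with east-step set $J$, hence must equal $E$, forcing $\stat_L(E) = J$.

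For part (ii), assume $\stat_L(E) = J$. Set $C^{(\ell)} := E$ and inductively define $C^{(d+1)}$ as the image of $C^{(d)}$ under the forward direction of \Cref{lem:suffcondhard}, which is applicable exactly when the precondition $(d, n-d) \notin \mar{L}{C^{(d)}}$ holds. Bijectivity of that map guarantees uniqueness of $C^{(d+1)}$ among paths in $\latpaths{n,d+1}$ weakly above $L$ with statistic $J$; running the bijection backward shows that uniqueness propagates upward from $C^{(\ell)} = E$ along the chain.

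The main obstacle is part (iii), translating the precondition $(d, n-d) \notin \mar{L}{C^{(d)}}$ into the explicit bound $d < \ell + k$. From the description of the effect on the marking path at the end of the proof of \Cref{lem:suffcondhard}, the transition from $C^{(d)}$ to $C^{(d+1)}$ replaces a single diagonal step of $\mar{L}{C^{(d)}}$ by an east step while leaving the remainder of the marking path's structure intact. In particular, the portion of $\mar{L}{C^{(d)}}$ after its last $y$-contributing step (i.e., its horizontal tail at the maximum height $n - d$) is invariant along the chain up to the shift in maximum height. Consequently, the smallest $x$-coordinate at which $\mar{L}{C^{(d)}}$ attains its maximum height coincides with the corresponding smallest $x$-coordinate for $\mar{L}{E}$, which is $\ell + k$ by the very definition of $k$. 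Since the endpoint $(d, n - d)$ of $C^{(d)}$ lies at this maximum height, it belongs to $\mar{L}{C^{(d)}}$ if and only if $d \ge \ell + k$. Combining (i), (ii), and (iii) yields the stated characterization of admissible $d$ and the uniqueness of the corresponding path.
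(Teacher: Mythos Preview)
Your proof follows essentially the same approach as the paper: both reduce the proposition to iterating the bijection of \Cref{lem:suffcondhard}, starting from the unique base path~$E$ and tracking when the forward direction can no longer be applied. The paper's proof is in fact considerably terser than yours---it simply asserts that necessity, sufficiency, and uniqueness are direct consequences of \Cref{lem:suffcondhard}---so your parts (i) and (ii) spell out what the paper leaves implicit. Your part (iii), identifying the threshold with $\ell+k$ via the invariance of the first $x$-coordinate at which the marking path attains its maximal height, is the one place where you go beyond the paper's proof; the paper does not make this argument explicit at all. Your justification there leans on the ``relative positions'' clause at the end of the proof of \Cref{lem:suffcondhard}, which is the right tool, though the specific claim that each transition replaces exactly one diagonal step of the marking path by an east step could be stated more carefully. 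This is not a gap relative to the paper, which is silent on the point.
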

  
\begin{proof}
  We have seen in \Cref{lem:suffcondhard} that these two conditions are both necessary and sufficient for such a path to exist.
  The uniqueness is also a direct consequence of the previous lemma because the path~$E$ is obviously the unique path in $\latpaths{n,\ell}$ with $\stat_L(E) = J = \east{E}$.
\end{proof}
  
One may easily check that \Cref{prop:constructpathfromstathard} indeed reduces to \Cref{prop:constructpathfromstatsimple} for the trivial lower bound.
The parameter~$k$ introduced in the previous proposition is the horizontal distance between the path~$C$ and its marking path~$\mar{L}{C}$ at its final height $n-\ell$.
In the example on the bottom left in \Cref{fig:modificationlemma} this parameter is $k = 2$.

For the following treatment, we set
\begin{align*}
  \paths{U,L}^{\e} &= \paths{U,L}^{\e,-} \sqcup \paths{U,L}^{\e,+} \\
  \paths{U,L}^{\e}_\circ &= \paths{U,L}^{\e,-}_\circ \sqcup \paths{U,L}^{\e,+}_\circ
\end{align*}
where
\[
  \paths{U,L}^{\e,-} = \{ C \in \paths{U,L}^{\e} \mid n \in \stat_L(C) \}
\]
and $\paths{U,L}^{\e,-}_\circ$ are the paths in $\paths{U,L}^{\e,-}$ obtained by removing the final east step, and $\paths{U,L}^{\e,+}$ and $\paths{U,L}^{\e,+}_\circ$ are their complements.
Observe here that we do not provide such a simple criterion to describe $\paths{U,L}^{\e,-}$ as we did in~\eqref{eq:Ltrive}.

\begin{corollary}
\label{prop:intersectionhard}
  We have
  \[
  \bigset{\stat_L(C)}{C \in \paths{U,L}^{\e,-}_\circ} = \bigset{\stat_L(C)}{C \in \paths{U,L}^{\e}_\circ} \cap \bigset{\stat_L}{\paths{U,L}^{\n}_\circ}.
  \]
\end{corollary}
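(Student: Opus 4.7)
The plan is to adapt the argument used to prove \Cref{prop:intersectionsimple} to the present general setting, substituting \Cref{lem:suffcondhard} and \Cref{prop:constructpathfromstathard} for their simple-version counterparts \Cref{lem:suffcondsimple} and \Cref{prop:constructpathfromstatsimple}. The guiding equivalence I want to exploit is the following: for $C \in \paths{U,L}^\e_\circ$, the membership $C \in \paths{U,L}^{\e,-}_\circ$ (equivalently, $n \in \stat_L(C \cdot \e)$) is equivalent to the marking path of $C$ admitting the ``extension room'' required by the hypothesis of \Cref{lem:suffcondhard}, and hence to the existence of a statistic-preserving partner $C' \in \latpaths{n-1,d}$ produced by that lemma.

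For the forward inclusion $\bigset{\stat_L(C)}{C \in \paths{U,L}^{\e,-}_\circ} \subseteq \bigset{\stat_L(C)}{C \in \paths{U,L}^\e_\circ} \cap \bigset{\stat_L(C)}{C \in \paths{U,L}^\n_\circ}$, I would start from $C \in \paths{U,L}^{\e,-}_\circ$ with $J = \stat_L(C)$. Membership of $J$ in the first factor on the right is immediate because $\paths{U,L}^{\e,-}_\circ \subseteq \paths{U,L}^\e_\circ$. For the second factor, the defining condition $n \in \stat_L(C \cdot \e)$ places $C$ in the domain of the bijection of \Cref{lem:suffcondhard}; the lemma then yields $C' \in \latpaths{n-1,d}$ with $\stat_L(C') = J$, obtained by replacing a uniquely determined north step of $C$ by an east step. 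It then remains to verify that this $C'$ actually lies in $\paths{U,L}^\n_\circ$, i.e.\ that the local swap keeps $C'$ weakly between the truncated boundaries $U^\n_\circ$ and $L^\n_\circ$.

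For the reverse inclusion, I would invoke uniqueness in \Cref{prop:constructpathfromstathard}. If $J$ lies in the right-hand intersection, then the realizing paths $C \in \paths{U,L}^\e_\circ$ and $C' \in \paths{U,L}^\n_\circ$ are uniquely determined by $J$ within the ambient sets $\latpaths{n-1,d-1}$ and $\latpaths{n-1,d}$ respectively, and therefore must be matched by the bijection of \Cref{lem:suffcondhard}. The mere existence of such a statistic-preserving lift encodes the fact that the marking path of $C$ does not use up its extension room; transferred back to the appended path $C \cdot \e \in \latpaths{n,d}$, this is the assertion $n \in \stat_L(C \cdot \e)$, placing $C \cdot \e$ in $\paths{U,L}^{\e,-}$ and hence $C$ in $\paths{U,L}^{\e,-}_\circ$.

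The main technical obstacle I anticipate is the boundary verification in the forward direction. \Cref{lem:suffcondhard} only guarantees preservation of the statistic, so one must separately check that the swap keeps $C'$ within the two truncated bounding paths. Because the modification occurs at the last north step of $C$ whose realization begins on $\mar{L}{C}$, and because the marking path is by construction wedged between the demarcation path $\dem{L}{C}$ (which is weakly above $L$) and $C$ itself (which is weakly below $U$), the swap is localized in a region that should keep $C'$ within $\paths{U,L}^\n_\circ$. Turning this intuition into a rigorous argument will require a careful comparison of the truncated boundaries $U^\e_\circ, U^\n_\circ$ (and symmetrically $L^\e_\circ, L^\n_\circ$), together with attentive book-keeping for the extension mechanism underlying the general definitions of the demarcation and marking paths when the east-step counts of $C$ and $L$ differ; this is where I expect the proof to demand the most care.
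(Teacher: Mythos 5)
Your approach mirrors the paper's almost exactly. The paper's proof reformulates the corollary as ``$n\in\stat_L(C)$ iff $C_\circ\in\paths{U,L}^\e_\circ$ and there exists $D_\circ\in\paths{U,L}^\n_\circ$ with the same statistic,'' then observes $n\in\stat_L(C)$ iff $C_\circ\in\paths{U,L}^\e_\circ$ and $(d-1,n-d)\notin\mar{L}{C_\circ}$, and closes by invoking \Cref{prop:constructpathfromstathard}. That is precisely the chain you propose: the ``extension room'' condition is the same as the endpoint not lying on the marking path, \Cref{lem:suffcondhard} manufactures the partner, and uniqueness from \Cref{prop:constructpathfromstathard} handles the reverse inclusion.

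The concern you flag --- that \Cref{lem:suffcondhard} and \Cref{prop:constructpathfromstathard} only control the lower bound $L$ and say nothing about the partner path staying weakly below the truncated upper boundary $U^\n_\circ$ --- is a real and legitimate observation, and it is something the paper's proof also does not address explicitly. The paper goes directly from ``$(d-1,n-d)\notin\mar{L}{C_\circ}$'' to ``there exists $D_\circ\in\paths{U,L}^\n_\circ$ with the same statistic'' via \Cref{prop:constructpathfromstathard}, but that proposition guarantees only a $D_\circ$ weakly above the relevant truncation of $L$, not weakly below the truncation of $U$. You are therefore not missing anything that the paper supplies: the boundary bookkeeping you identify as the hard part is exactly the step the paper leaves implicit. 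To complete either your proof or the paper's one would need an argument that the single north-to-east swap produced by \Cref{lem:suffcondhard}, applied to a $C_\circ$ weakly below $U^\e_\circ$, yields a path weakly below $U^\n_\circ$; since $\east{U^\n_\circ}=\east{U^\e_\circ}\cup\{p\}$ (where $p$ is the position of the last north step of $U$) while $\east{D_\circ}=\east{C_\circ}\cup\{i\}$, this amounts to a componentwise comparison that is not formally immediate and deserves the careful treatment you anticipate. So: same route as the paper, with a correctly identified (and shared) gap that still needs to be closed.
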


\begin{proof}
  Let $C \in \paths{U,L}$ and let $C_\circ$ be the path obtained from~$C$ by removing the last step.
  We then have to show that $n \in \stat_L(C)$ if and only if $C_\circ \in \paths{U,L}^\e_\circ$ and there exists a path~$D_\circ \in \paths{U,L}^\n_\circ$ with $\stat_L(C_\circ) = \stat_L(D_\circ)$.

  We have that $n \in \stat_L(C)$ if and only if $C_\circ \in \paths{U,L}^\e_\circ$ and $(d-1,n-d) \notin \mar{L}{C_\circ}$.
  The claim follows with \Cref{prop:constructpathfromstathard}.
\end{proof}

\begin{proof}[Proof of \Cref{thm:Lrecurrencehard}]
  Consider the decomposition
  \[
    \paths{U,L} = \paths{U,L}^{\e,-} \sqcup \paths{U,L}^{\e,+} \sqcup \paths{U,L}^{\n}.
  \]
  If $U_n = \n$, we have $\Se_\mathcal{L} = \emptyset$ and thus $S_\mathcal{L} = \Sn_\mathcal{L}$ satisfies the proposed decomposition.
  The analogous consideration holds for $L_n = \e$.
  Otherwise, we have $U_n = \e, L_n = \n$ and, by the definition of $\paths{U,L}^{\e,-}$, we obtain
  \begin{align*}
    \bigset{\stat_L(C)}{C \in \paths{U,L}} =\ 
      &
      \bigset{ \stat_L(C) \cup \{n\} }{ C \in \paths{U,L}^{\e,-}_\circ}\ \cup
      \\
      &
      \bigset{ \stat_L(C) \hspace*{32.5pt} }{ C  \in \paths{U,L}^{\e,+}_\circ }\ \cup
      \\
      &
      \bigset{ \stat_L(C) \hspace*{32.5pt} }{ C \in \paths{U,L}^{\n}_\circ }.
  \end{align*}
  \Cref{prop:intersectionhard} now implies \Cref{thm:Lrecurrencehard}.
\end{proof}

\bibliographystyle{alpha}
\bibliography{EngstroemSanyalStump}

\end{document}